\newtheorem{theorem}{\sc Theorem}[section]
\newtheorem{proposition}[theorem]{\sc Proposition}
\newtheorem{notation}[theorem]{\sc Notation}
\newtheorem{lemma}[theorem]{\sc Lemma}
\newtheorem{corollary}[theorem]{\sc Corollary}
\theoremstyle{definition}
\newtheorem{definition}[theorem]{\sc Definition}
\newtheorem{example}[theorem]{\sc Example}
\theoremstyle{remark}
\newtheorem{remark}[theorem]{\sc Remark}
\newtheorem{claim}[theorem]{}
\newtheorem{question}[theorem]{\sc Question}
\newcommand{\id}{{\sf id}}
\newcommand{\Aa}{\mathcal{A}}
\def\Bialg{{\sf Bialg}}
\def\RLie{{\sf {Lie}_{p}}}
\begin{document}
\title[Restricted Lie algebras via monadic decomposition]{Restricted Lie
algebras via monadic decomposition}
\thanks{This note was written while A. Ardizzoni was member of GNSAGA and
partially supported by the research grant ``Progetti di Eccellenza
2011/2012'' from the ``Fondazione Cassa di Risparmio di Padova e Rovigo''.
He thanks the members of the department of Mathematics of both Vrije
Universiteit Brussel and Universit\'{e} Libre de Bruxelles for their warm
hospitality and support during his stay in Brussels in August 2013, when the
work on this paper was initiated. The second named author is a Marie Curie
fellow of the Istituto Nazionale di Alta Matematica.}

\begin{abstract}
We give a description of the category of restricted Lie algebras over a
field $\Bbbk $ of prime characteristic by means of monadic decomposition of
the functor that computes the $\Bbbk $-vector space of primitive elements of
a $\Bbbk $-bialgebra.
\end{abstract}

\keywords{Monads, restricted Lie algebras}
\author{Alessandro Ardizzoni}
\address{%
\parbox[b]{\linewidth}{University of Turin, Department of Mathematics ``G. Peano'', via
Carlo Alberto 10, I-10123 Torino, Italy}}
\email{alessandro.ardizzoni@unito.it}
\urladdr{\url{sites.google.com/site/aleardizzonihome}}
\author{Isar Goyvaerts}
\address{%
\parbox[b]{\linewidth}{University of Turin, Department of Mathematics ``G. Peano'', via
Carlo Alberto 10, I-10123 Torino, Italy}}
\email{isarrobert.goyvaerts@unito.it}
\author{Claudia Menini}
\address{%
\parbox[b]{\linewidth}{University of Ferrara, Department of Mathematics and Computer Science, Via Machiavelli
35, Ferrara, I-44121, Italy}}
\email{men@unife.it}
\urladdr{\url{sites.google.com/a/unife.it/claudia-menini}}
\subjclass[2010]{Primary 18C15; Secondary 16S30}
\maketitle
\tableofcontents

\section*{Introduction}

Let $\Bbbk $ be a field and let $\mathfrak{M}$ denote the category of $\Bbbk$%
-vector spaces. Denoting $\mathsf{Alg}(\mathfrak{M})$ the category of
unital, associative $\Bbbk$-algebras, there is the obvious forgetful functor
$\Omega:\mathsf{Alg}(\mathfrak{M})\to \mathfrak{M}$, which has a left
adjoint $T$. The composition $\Omega T$ defines a monad on $\mathfrak{M}$
and the comparison functor ${\Omega}_{1}$ from $\mathsf{Alg}(\mathfrak{M})$
to ${_{\Omega T}\mathfrak{M}}$ -the Eilenberg-Moore category associated to
the monad $\Omega T$- can be shown to have a left adjoint $T_1$ such that
the adjunction $(T_1,\Omega _1)$ becomes an equivalence of categories, i.e. $%
\Omega$ is a monadic functor. \newline
It is well-known that for any $V\in\mathfrak{M} $, $TV$ can be given
moreover a $\Bbbk$-bialgebra structure, thus inducing a functor $\widetilde{T%
}:\mathfrak{M}\to \mathsf{Bialg}(\mathfrak{M})$. Now, a right adjoint for $%
\widetilde{T}$ is provided by the functor $P$ that computes the space of
primitive elements of any bialgebra. This adjunction furnishes $\mathfrak{M}$
with a (different) monad $P\widetilde{T}$. This time, $P$ fails to be
monadic, alas. Indeed, $P_1$ -the comparison functor associated to the monad
$P\widetilde{T}$- still allows for a left adjoint ${\widetilde{T}}_1$, but
the adjunction $({\widetilde{T}}_1,P_1)$ is not an equivalence anymore. Yet,
something can be done with it. Using the notation ${\mathfrak{M}}_2$ for the
Eilenberg-Moore category of the monad $P_1 {\widetilde{T}}_1$ on ${_{ P
\widetilde{T}}\mathfrak{M}}$ (the Eilenberg-Moore category of the monad $P%
\widetilde{T})$, it was proven in \cite{AGM-MonadicLie1} that there exists a
functor
\begin{equation*}
{\widetilde{T}}_2: {\mathfrak{M}}_2\to \mathsf{Bialg}(\mathfrak{M})
\end{equation*}
that allows a right adjoint $P_2$ and which is moreover full and faithful.
This means that the functor $P$ has so-called ``monadic decomposition of
length at most 2''. \newline
In case the characteristic of the ground field $\Bbbk $ is zero, the above
result was further refined in \cite{AM-MonadSym}. Indeed, amongst other
things in the cited article, it is proven that the category ${\mathfrak{M}}%
_2 $ is equivalent with $\mathsf{Lie}(\mathfrak{M})$, the category of $\Bbbk$%
-Lie algebras. This theorem is actually obtained as a consequence of a
more general statement (\cite[Theorem 7.2]{AM-MonadSym}) that is proven for
Lie algebras in abelian symmetric monoidal categories that satisfy the so-called Milnor-Moore condition. It is then verified, using a result of Kharchenko (\cite[Lemma 6.2]{Khar}), that the category of vector spaces over a field of characteristic zero satisfies this condition.
In concreto, there exists a functor $\Gamma$ such that $%
(P_2 \widetilde{U}, \Gamma)$ gives the afore-mentioned equivalence, $%
\widetilde{U}$ being the functor that computes the universal enveloping
bialgebra of any Lie algebra in characteristic zero. \newline
\newline
In case the characteristic of $\Bbbk $ is a prime number $p$, things appear
to be slightly different. Of course, one can still work with the ordinary
definition of Lie algebra and consider its universal enveloping algebra
(which is still a $\Bbbk $-bialgebra, also in finite characteristic), but, in general, the latter has primitive elements which are not contained in the Lie algebra. Hence this does not seem to be the appropriate notion if we wish to imitate the
above-mentioned equivalence between ${\mathfrak{M}}_2$ and $\mathsf{Lie}(%
\mathfrak{M})$ we had in case $\mathrm{char}\left( \Bbbk \right) =0$.
\newline
The aim of this note is to provide an appropriate equivalence in case of
prime characteristic. In order to do so, we will use a slightly different approach than the one in
\cite{AM-MonadSym} (as Kharchenko's mentioned lemma is not at hand in prime characteristic). Therefore, recall that a \textit{restricted Lie algebra}
in characteristic $p$ (which is a notion due to Jacobson, see \cite{Jac}) is
a triple $(L,[-,-],{-}^{[p]})$ where $(L,[,])$ is an ordinary $\Bbbk$-Lie algebra,
endowed with a map ${-}^{[p]}: L\to L$ satisfying three conditions. 
These restricted Lie algebras in many respects bear a closer relation to Lie
algebras in characteristic $0$ than ordinary Lie algebras in characteristic $%
p$. 
\\Now, restricted Lie algebras cannot be seen as Lie algebras in some
abelian symmetric monoidal category, at least not to the authors' knowledge.
However, in this short article we show that restricted Lie algebras allow
for an interpretation using monadic decomposition of the functor $P$.
Indeed, Theorem \ref{teo:DecVec} states that one can construct a functor ${\Lambda}: {%
\mathfrak{M}}_2\to \mathsf{{Lie}_{p}}$, $\mathsf{{Lie}_{p}}$ being the
category of restricted Lie algebras over $\Bbbk$, such that $(P_2 \widetilde{\mathfrak{u}}%
, {\Lambda})$ defines an equivalence between $\mathsf{{Lie}_{p}}$ and ${%
\mathfrak{M}}_2$. Here $\widetilde{\mathfrak{u}}$ is the functor computing
the restricted universal enveloping algebra of a restricted Lie algebra. 
\\The article is organized as follows. In the preliminary section we recall
some notation and results concerning monadic decomposition. Along the way,
we address to the interested reader by stating two questions which seem to
be of independent interest. 
\\In the second section, we prove Theorem \ref{teo:DecVec}, using a lemma due to Berger.
\\In the last section, we provide an alternative way to arrive at the conclusion of Theorem \ref{teo:DecVec}, by using so-called adjoint squares. These categorical tools actually allows us to refine our main result. Indeed, in Remark \ref{rem-3.8} we obtain that the functor $\mathcal{P}\widetilde{T}$ is left adjoint to the forgetful functor $H_{\mathsf{{Lie}_{p}}}:\mathsf{{Lie}_{p}}\rightarrow \mathfrak{M}$. The left adjoint of $H_{\mathsf{{Lie}_{p}}}$ already appeared in literature, for some particular base field $\Bbbk$ (e.g. $\Bbbk=\mathbb{Z}_2$), under the name of ``free restricted Lie algebra functor'' and several constructions of this functor can be found. We note that here, in our approach, no additional requirement on $\Bbbk$ is needed other than the finite characteristic. Finally, in Remark \ref{rem-3.10}, using adjoint squares, it is shown that the adjunction $(\widetilde{T}_{2},P_{2}) $ turns out to identify with $(\widetilde{\mathfrak{u}},\mathcal{P})$ via $\Lambda $, $\mathcal{P}$ being the functor
that computes the restricted primitive elements of a bialgebra in
characteristic $p$. 

\section{Preliminary results}

\label{preliminares} In this section, we shall fix some basic notation and
terminology.

\begin{notation}
Throughout this note $\Bbbk $ will denote a field. $\mathfrak{M}$ will
denote the category of vector spaces over $\Bbbk $. Unadorned tensor product
are to taken over $\Bbbk $ unless stated otherwise. \newline
When $X$ is an object in a category $\mathcal{C}$, we will denote the
identity morphism on $X$ by $1_X$ or $X$ for short. For categories $\mathcal{%
C}$ and $\mathcal{D}$, a functor $F:\mathcal{C}\to \mathcal{D}$ will be the
name for a covariant functor; it will only be a contravariant one if it is
explicitly mentioned. By $\mathsf{id}_{\mathcal{C}}$ we denote the identity
functor on $\mathcal{C}$. For any functor $F:\mathcal{C}\to \mathcal{D}$, we
denote $\mathsf{Id}_{F}$ (or sometimes -in order to lighten notation in some
computations- just $F$, if the context doesn't allow for confusion) the
natural transformation defined by $\mathsf{Id}_{FX}=1_{FX}$.
\end{notation}

\begin{claim}
\textbf{Monadic decomposition.} Recall that a \textit{monad} on a category $%
\mathcal{A}$ is a triple $\mathbb{Q}:=\left( Q,m,u\right) $ consisting of a
functor $Q:\mathcal{A}\rightarrow \mathcal{A}$ and natural transformations $%
m:QQ\rightarrow Q$ and $u:\mathcal{A}\rightarrow Q$ satisfying the
associativity and the unitality conditions $m\circ mQ=m\circ Qm$ and $m\circ
Qu=\mathrm{\mathsf{Id}}_{Q}=m\circ uQ.$ An\emph{\ }\textit{algebra} over a
monad $\mathbb{Q}$ on $\mathcal{A}$ (or simply a $\mathbb{Q}$\emph{-algebra}%
) is a pair $\left( X,{\mu }\right) $ where $X\in \mathcal{A}$ and ${\mu }%
:QX\rightarrow X$ is a morphism in $\mathcal{A}$ such that ${\mu }\circ Q{%
\mu }={\mu }\circ mX$ and ${\mu }\circ uX=X.$ A \emph{morphism between two} $%
\mathbb{Q}$-\emph{algebras} $\left( X,{\mu }\right) $ and $\left( X^{\prime
},{\mu }^{\prime }\right) $ is a morphism $f:X\rightarrow X^{\prime }$ in $%
\mathcal{A}$ such that ${\mu }^{\prime }\circ Qf=f\circ {\mu }$. For the
time being, we will denote by ${_{\mathbb{Q}}\mathcal{A}}$ the category of $%
\mathbb{Q}$-algebras and their morphisms. This is the so-called \textit{%
Eilenberg-Moore category} of the monad $\mathbb{Q}$. We denote $_QU:{_Q%
\mathcal{A}}\to\mathcal{A}$ the forgetful functor. When the multiplication
and unit of the monad are clear from the context, we will just write $Q$
instead of $\mathbb{Q}$. \newline
\newline
Let $\left( L:\mathcal{B}\rightarrow \mathcal{A},R:\mathcal{A}\rightarrow
\mathcal{B}\right) $ be an adjunction with unit $\eta $ and counit $\epsilon
$. Then $\left( RL,R\epsilon L,\eta \right) $ is a monad on $\mathcal{B}$
and we can consider the so-called \textit{comparison functor} $K:\mathcal{A}%
\rightarrow {_{RL}\mathcal{B}}$ of the adjunction $\left( L,R\right) $ which
is defined by $KX:=\left( RX,R\epsilon X\right) $ and $Kf:=Rf.$ Note that $%
_{RL}U\circ K=R$.

\begin{definition}
An adjunction $(L:\mathcal{B}\rightarrow \mathcal{A},R:\mathcal{A}%
\rightarrow \mathcal{B})$ is called \textit{monadic} (tripleable in Beck's
terminology \cite[Definition 3]{Beck}) whenever the comparison functor $K:%
\mathcal{A}\rightarrow {_{RL}}\mathcal{B}$ is an equivalence of categories.
A functor $R$ is called \textit{monadic} if it has a left adjoint $L$ such
that the adjunction $(L,R)$ is monadic, see \cite[Definition 3']{Beck}.
\end{definition}

\begin{definition}
(\cite[page 231]{AT}) A monad $\left( Q,m,u\right) $ is called \textit{%
idempotent} if $m$ is an isomorphism. An adjunction $\left( L,R\right) $ is
called \textit{idempotent} whenever the associated monad is idempotent.
\end{definition}

The interested reader can find results on idempotent monads in \cite{AT,MS}.
Here we just note that the fact that $\left( L,R\right) $ being idempotent
is equivalent to requiring that $\eta R$ is a natural isomorphism. The
notion of idempotent monad is tightly connected with the following.

\begin{definition}
\label{def:MonDec} (See \cite[Definition 2.7]{AGM-MonadicLie1}, \cite[%
Definition 2.1]{AHT} and \cite[Definitions 2.10 and 2.14]{MS}) Fix a $N\in
\mathbb{N}$. A functor $R$ is said to have a\emph{\ }\textit{monadic
decomposition of monadic length }$N$\emph{\ }when there exists a sequence $%
\left( R_{n}\right) _{n\leq N}$ of functors $R_{n}$ such that

1) $R_{0}=R$;

2) for $0\leq n\leq N$, the functor $R_{n}$ has a left adjoint functor $%
L_{n} $;

3) for $0\leq n\leq N-1$, the functor $R_{n+1}$ is the comparison functor
induced by the adjunction $\left( L_{n},R_{n}\right) $ with respect to its
associated monad;

4) $L_{N}$ is full and faithful while $L_{n}$ is not full and faithful for $%
0\leq n\leq N-1.$ \newline
A functor $R$ having monadic length $N$ is equivalent to requiring that the
forgetful functor $U_{N,N+1}$ defines an isomorphism of categories and that
no $U_{n,n+1}$ has this property for $\leq n\leq N-1$ (see \cite[Remark 2.4]%
{AGM-MonadicLie1}). \newline
Note that for a functor $R:\mathcal{A}\rightarrow \mathcal{B}$ having a
monadic decomposition of monadic length\emph{\ }$N$, we thus have a diagram
\begin{equation}  \label{diag:MonadicDec}
\xymatrixcolsep{2cm} \xymatrix{\mathcal{A}\ar@<.5ex>[d]^{R_0}&\mathcal{A}%
\ar@<.5ex>[d]^{R_1}\ar[l]_{\mathrm{Id}_{\mathcal{A}}}&\mathcal{A}%
\ar@<.5ex>[d]^{R_2}\ar[l]_{\mathrm{Id}_{\mathcal{A}}}&\cdots
\ar[l]_{\mathrm{Id}_{\mathcal{A}}}\quad\cdots&\mathcal{A}\ar@<.5ex>[d]^{R_N}%
\ar[l]_{\mathrm{Id}_{\mathcal{A}}}\\
\mathcal{B}_0\ar@<.5ex>@{.>}[u]^{L_0}&\mathcal{B}_1\ar@<.5ex>@{.>}[u]^{L_1}
\ar[l]_{U_{0,1}}&\mathcal{B}_2 \ar@<.5ex>@{.>}[u]^{L_2}
\ar[l]_{U_{1,2}}&\cdots\ar[l]_{U_{2,3}}\quad\cdots&\mathcal{B}_N%
\ar@<.5ex>@{.>}[u]^{L_N}\ar[l]_{U_{N-1,N}} }
\end{equation}

where $\mathcal{B}_{0}=\mathcal{B}$ and, for $1\leq n\leq N,$

\begin{itemize}
\item $\mathcal{B}_{n}$ is the category of $\left( R_{n-1}L_{n-1}\right) $%
-algebras ${_{R_{n-1}L_{n-1}}\mathcal{B}}_{n-1}$;

\item $U_{n-1,n}:\mathcal{B}_{n}\rightarrow \mathcal{B}_{n-1}$ is the
forgetful functor ${_{R_{n-1}L_{n-1}}}U$.
\end{itemize}

We will denote by $\eta _{n}:{\mathsf{id}}_{\mathcal{B}_{n}}\rightarrow
R_{n}L_{n}$ and $\epsilon _{n}:L_{n}R_{n}\rightarrow {\mathsf{id}}_{\mathcal{%
A}}$ the unit, resp. counit of the adjunction $\left( L_{n},R_{n}\right) $
for $0\leq n\leq N$. Note that one can introduce the forgetful functor $%
U_{m,n}:\mathcal{B}_{n}\rightarrow \mathcal{B}_{m}$ for all $m\leq n $ with $%
0\leq m,n\leq N$.
\end{definition}

We recall the following from \cite{AGM-MonadicLie1}:

\begin{proposition}
\cite[Proposition 2.9]{AGM-MonadicLie1} \label{pro:idemmonad2}Let $\left( L:%
\mathcal{B}\rightarrow \mathcal{A},R:\mathcal{A}\rightarrow \mathcal{B}%
\right) $ be an idempotent adjunction. Then $R:\mathcal{A}\rightarrow
\mathcal{B}$ has a\emph{\ }monadic decomposition of monadic length at most $%
1 $.
\end{proposition}

Letting $\Bbbk$ be a field, and $B$ a $\Bbbk$-bialgebra, the set $P({B})$ of
primitive elements of ${B}$ is defined as
\begin{equation*}
P({B})=\{x\in {B} | \Delta(x)=1\otimes x+x\otimes 1\},
\end{equation*}
where $\Delta$ is the comultiplication of $B$. $P({B})$ forms a $\Bbbk$%
-vector space, yielding a functor
\begin{equation}  \label{primitive}
P:\mathsf{Bialg}(\mathfrak{M})\to \mathfrak{M}
\end{equation}
Theorem 3.4 from loc. cit. asserts that the functor $P$ has monadic
decomposition at most 2, by showing that the comparison functor $P_1$ of the
adjunction $(\widetilde{T},P)$ admits a left adjoint $\widetilde{T}_1$ such
that the adjunction $(\widetilde{T}_1,P_1)$ is idempotent. For the sake of
completeness, we recall here that $\widetilde{T}$ is the functor from $%
\mathfrak{M}$ to $\mathsf{Bialg}(\mathfrak{M})$, assigning to any vector
space $V$ the tensor algebra $T(V)$ (which can be endowed with a bialgebra
structure $\widetilde{T}(V)$, as is known).
\newline
Intriguingly, it is not known to the authors whether the bound provided by
this above-mentioned Theorem 3.4 is sharp. It would thus be satisfying to
have an answer to the following question -of independent interest- the
interested reader is evidently invited to think about.

\begin{question}
Is the functor $\widetilde{T}_1$ fully faithful?
\end{question}

As mentioned in the Introduction, it is known -by combining Theorems 7.2 and
8.1 from \cite{AM-MonadSym}- that in case $\mathrm{char}(\Bbbk)=0$, the
category ${\mathfrak{M}}_{2}$ is equivalent to the category of $\Bbbk$-Lie
algebras. It is the aim of this note to handle the case of finite
characteristic. Before doing so, we would like to round off this preliminary
section by the following.

\begin{definition}
\label{def:comparable}We say that a functor $R$ is\emph{\ \textit{comparable}
}whenever there exists a sequence $\left( R_{n}\right) _{n\in \mathbb{N}}$
of functors $R_{n}$ such that $R_{0}=R$ and, for $n\in \mathbb{N}$,

1) the functor $R_{n}$ has a left adjoint functor $L_{n}$;

2) the functor $R_{n+1}$ is the comparison functor induced by the adjunction
$\left( L_{n},R_{n}\right) $ with respect to its associated monad.

In this case we have a diagram as (\ref{diag:MonadicDec}) but not
necessarily stationary. Hence we can consider the forgetful functors $%
U_{m,n}:\mathcal{B}_{n}\rightarrow \mathcal{B}_{m}$ for all $m\leq n$ with $%
m,n\in \mathbb{N}$.
\end{definition}

\begin{remark}
Fix a $N\in \mathbb{N}$. A functor $R$ having a\emph{\ }monadic
decomposition of monadic length $N$ is comparable, see \cite[Remark 2.10]%
{AGM-MonadicLie1}.
\end{remark}

By the proof of Beck's Theorem \cite[Proof of Theorem 1]{Beck}, one gets the
following result.

\begin{lemma}[\protect\cite{AM-MonadSym}]
\label{lem: coequalizers} Let $\mathcal{A}$ be a category such that, for any
(reflexive) pair $\left( f,g\right) $ (\cite[3.6, page 98]{BW-TTT}) where $%
f,g:X\rightarrow Y$ are morphisms in $\mathcal{A}$, one can choose a
specific coequalizer. Then the comparison functor $K:\mathcal{A}\rightarrow {%
_{RL}\mathcal{B}}$ of an adjunction $\left( L,R\right) $ is a right adjoint.
Thus any right adjoint $R:\mathcal{A}\rightarrow {\mathcal{B}}$ is
comparable.
\end{lemma}

Let $\Bbbk$ again be a field of characteristic zero. Dually to $\Bbbk$-Lie
algebras, recall that $\Bbbk$-Lie coalgebras, as introduced by Michaelis in
\cite{Michaelis-LieCoalg}, are precisely Lie algebras in the abelian
symmetric monoidal category $({\mathfrak{M}}^{op}, {\otimes}^{op}, \Bbbk)$,
where associativity and unit constraints are taken to be trivial. \newline
$Q(B)$ denotes the $\Bbbk$-Lie coalgebra of indecomposables of a $\Bbbk$%
-bialgebra, more precisely, $Q(B)=I/I^2$, where $I=\ker\varepsilon$, $%
\varepsilon$ being the counit of $B$. This construction is functorial and,
composed with the forgetful functor $F$ from the category of $\Bbbk$-Lie
coalgebras to ${\mathfrak{M}}$, yields the following functor
\begin{equation*}
Q: \mathsf{Bialg}(\mathfrak{M})\to {\mathfrak{M}}.
\end{equation*}
In \cite[page 18]{Michaelis-LieCoalg}, it is asserted that a right adjoint
for the functor $F$ is provided by the functor $L^c$ that computes the
so-called ``cofree Lie coalgebra'' on a vector space. Finally, $Q$ has a
right adjoint given by the cofree coalgebra functor $\widetilde{T}^c$. In
fact $Q=F\circ \widetilde{Q}$, where $\widetilde{Q}$ is the functor sending
a bialgebra $B$ to the $\Bbbk$-Lie coalgebra $Q(B)$, while, by \cite[page 24]%
{Michaelis-LieCoalg}, we have $\widetilde{T}^c=\widetilde{U}^c_H\circ L^c$,
where $\widetilde{U}^c_H(C)$ is the universal coenveloping bialgebra of a
Lie coalgebra $C$ (by the bialgebra version of \cite[Theorem, page 37]%
{Michaelis-LieCoalg}, the functor $\widetilde{U}^c_H$ is right adjoint to $%
\widetilde{Q}$). Lemma \ref{lem: coequalizers} now guarantees that the
functor
\begin{equation*}
Q^{op}: \mathsf{Bialg}(\mathfrak{M}^{op})\to {\mathfrak{M}^{op}}
\end{equation*}
is comparable. Strangely enough, at the moment, we don't have an answer to
the following question, which is -again- of independent interest in the
authors' opinion.

\begin{question}
Does the functor $Q^{op}$ also have monadic decomposition length at most 2?
If so, is the resulting category ${\mathfrak{M}}_2$ equivalent to the
category of $\Bbbk$-Lie coalgebras?
\end{question}
\end{claim}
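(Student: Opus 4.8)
The plan is to prove Lemma \ref{lem: coequalizers} by constructing an explicit left adjoint $\Phi$ to the comparison functor $K:\mathcal{A}\rightarrow {_{RL}\mathcal{B}}$; once this is in hand, $K=R_{1}$ is a right adjoint, and iterating the construction yields comparability. Write $\mathbb{T}=(RL,R\epsilon L,\eta)$ for the monad of $(L,R)$ on $\mathcal{B}$ and fix a $\mathbb{T}$-algebra $(B,\beta)$ with $\beta:RLB\rightarrow B$. The guiding idea, which is exactly the one in the proof of Beck's Theorem, is that $\Phi(B,\beta)$ should ``realise'' in $\mathcal{A}$ the formal quotient of the free object $LB$ by the relations imposed by $\beta$.

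First I would form in $\mathcal{A}$ the parallel pair $\epsilon_{LB},\,L\beta:LRLB\rightrightarrows LB$, where $\epsilon$ is the counit of $(L,R)$. Using the triangle identity $\epsilon_{LB}\circ L\eta_{B}=1_{LB}$ together with the unit axiom $\beta\circ\eta_{B}=1_{B}$ of the algebra $(B,\beta)$, one checks that $L\eta_{B}:LB\rightarrow LRLB$ is a common section of both maps, so that the pair is \emph{reflexive}. By hypothesis $\mathcal{A}$ carries a chosen coequalizer of every reflexive pair, and I set $\Phi(B,\beta)$ to be that chosen coequalizer, with structure map $q_{(B,\beta)}:LB\twoheadrightarrow\Phi(B,\beta)$. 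Functoriality of $\Phi$ then follows from the universal property of the coequalizer together with the coherence of the choice: a $\mathbb{T}$-algebra morphism $f:(B,\beta)\rightarrow(B',\beta')$ carries the reflexive pair of $(B,\beta)$ to that of $(B',\beta')$, hence induces a unique arrow on coequalizers.

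The heart of the argument is the natural bijection
\[
\mathcal{A}\bigl(\Phi(B,\beta),X\bigr)\;\cong\;{_{RL}\mathcal{B}}\bigl((B,\beta),KX\bigr),
\]
natural in $(B,\beta)$ and in $X$. I would obtain it by transposing twice. By the universal property of $q_{(B,\beta)}$, a morphism $\Phi(B,\beta)\rightarrow X$ is the same datum as a morphism $g:LB\rightarrow X$ with $g\circ\epsilon_{LB}=g\circ L\beta$. Under the adjunction isomorphism $\mathcal{A}(LB,X)\cong\mathcal{B}(B,RX)$, write $\widehat{g}=Rg\circ\eta_{B}:B\rightarrow RX$ for the transpose of $g$; a diagram chase using naturality of $\epsilon$ and the triangle identities of $(L,R)$ shows that the coequalizing condition on $g$ is equivalent to the equation $\widehat{g}\circ\beta=R\epsilon_{X}\circ RL\widehat{g}$, that is, to $\widehat{g}$ being a morphism of $\mathbb{T}$-algebras $(B,\beta)\rightarrow(RX,R\epsilon_{X})=KX$. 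Naturality in both variables is then routine, and this establishes $\Phi\dashv K$.

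Finally, for the last assertion I would note that in the tower (\ref{diag:MonadicDec}) every comparison functor $R_{n}$ has the \emph{same} domain $\mathcal{A}$; hence the construction above applies verbatim at each stage, with the monad of $(L_{n-1},R_{n-1})$ in place of $\mathbb{T}$, to produce a left adjoint $L_{n}$ of $R_{n}$. Thus $R$ is comparable. I expect the only genuinely delicate point to be the diagram chase identifying the coequalizing condition on $g$ with the $\mathbb{T}$-algebra axiom on $\widehat{g}$; by contrast, the reflexivity of the pair (needed to invoke the hypothesis) and the coherence of the chosen coequalizers (needed for functoriality of $\Phi$) are bookkeeping that must be recorded but present no real difficulty.
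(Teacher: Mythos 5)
Your construction of the left adjoint $\Phi$ to the comparison functor --- coequalizing the reflexive pair $\epsilon_{LB}, L\beta : LRLB \rightrightarrows LB$ with common section $L\eta_B$ and checking via adjunction transposes that maps out of the coequalizer correspond to $\mathbb{T}$-algebra morphisms into $KX$ --- is precisely the argument from the proof of Beck's Theorem that the paper invokes for Lemma \ref{lem: coequalizers}, and your diagram chase (the transpose of $g\circ\epsilon_{LB}$ being $Rg = R\epsilon_X\circ RL\widehat{g}$ and that of $g\circ L\beta$ being $\widehat{g}\circ\beta$) is correct. The paper offers no independent proof beyond this citation, so your proposal is correct and takes essentially the same approach.
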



\section{The category of restricted Lie algebras}

For the sake of the reader's comfort, we include a result due to Berger, here presented in a slightly different form.

\begin{lemma}[{cf. \protect\cite[Lemma 1.2]{Ber}}]
\label{lem:Ber} Consider the following diagram%
\begin{equation*}
\xymatrixrowsep{15pt}\xymatrixcolsep{20pt} \xymatrix{&\mathcal{E}^\prime
\ar@<.5ex>[dl]^\Psi\ar@<.5ex>[dr]^{\Psi^\prime}\\
\mathcal{M}\ar@{.>}@<.5ex>[ur]^\Phi\ar[dr]_U&&\mathcal{M}^\prime%
\ar@{.>}@<.5ex>[ul]^{\Phi^\prime}\ar@<.5ex>[dl]^{U^\prime}\\ &\mathcal{E}}
\end{equation*}%
where

\begin{itemize}
\item $U^{\prime }\circ \Psi ^{\prime }=U\circ \Psi ,$

\item $U$ and $U^{\prime }$ are conservative,

\item $\Psi $ and $\Psi ^{\prime }$ are coreflections i.e. functors having
fully faithful left adjoints $\Phi $ and $\Phi ^{\prime }$ respectively
(i.e. the units $\eta :\mathrm{id}_{\mathcal{M}}\rightarrow \Psi \Phi $ and $%
\eta ^{\prime }:\mathrm{id}_{\mathcal{M}^{\prime }}\rightarrow \Psi ^{\prime
}\Phi ^{\prime }$ are invertible).
\end{itemize}

Then $\left( \Psi ^{\prime }\Phi ,\Psi \Phi ^{\prime }\right) $ is an
adjoint equivalence of categories with unit $\widehat{\eta }$ and counit $%
\widehat{\epsilon }$ defined by
\begin{equation*}
\left( \widehat{\eta }\right) ^{-1}:=\Psi \Phi ^{\prime }\Psi ^{\prime }\Phi
\overset{\Psi \epsilon ^{\prime }\Phi }{\longrightarrow }\Psi \Phi \overset{%
\eta ^{-1}}{\longrightarrow }\mathrm{id}_{\mathcal{M}},
\end{equation*}%
\begin{equation*}
\widehat{\epsilon }:=\Psi ^{\prime }\Phi \Psi \Phi ^{\prime }\overset{\Psi
^{\prime }\epsilon \Phi ^{\prime }}{\longrightarrow }\Psi ^{\prime }\Phi
^{\prime }\overset{\left( \eta ^{\prime }\right) ^{-1}}{\longrightarrow }%
\mathrm{id}_{\mathcal{M}^{\prime }}.
\end{equation*}
\end{lemma}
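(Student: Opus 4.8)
The plan is to read off the adjoint equivalence directly from the displayed formulas: I would show that $\widehat{\eta}$ and $\widehat{\epsilon}$ are natural isomorphisms and then check the triangle identities for $\Psi^{\prime}\Phi\dashv\Psi\Phi^{\prime}$, so that $(\Psi^{\prime}\Phi,\Psi\Phi^{\prime})$ becomes an adjunction whose unit and counit are invertible, i.e. an adjoint equivalence. Write $\epsilon:\Phi\Psi\to\mathrm{id}_{\mathcal{E}^{\prime}}$ and $\epsilon^{\prime}:\Phi^{\prime}\Psi^{\prime}\to\mathrm{id}_{\mathcal{E}^{\prime}}$ for the counits of the two coreflections. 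The starting observation is that, by the triangle identities $\Psi\epsilon\circ\eta\Psi=\mathsf{Id}_{\Psi}$ and $\Psi^{\prime}\epsilon^{\prime}\circ\eta^{\prime}\Psi^{\prime}=\mathsf{Id}_{\Psi^{\prime}}$ of the two given adjunctions, together with the invertibility of $\eta$ and $\eta^{\prime}$, both $\Psi\epsilon=(\eta\Psi)^{-1}$ and $\Psi^{\prime}\epsilon^{\prime}=(\eta^{\prime}\Psi^{\prime})^{-1}$ are already natural \emph{isomorphisms}, even though $\epsilon$ and $\epsilon^{\prime}$ themselves need not be.

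The crux is to promote this to invertibility of the whiskered transformations $\Psi\epsilon^{\prime}\Phi$ and $\Psi^{\prime}\epsilon\Phi^{\prime}$ occurring in the formulas, and this is exactly where the hypotheses $U^{\prime}\Psi^{\prime}=U\Psi$ and the conservativity of $U,U^{\prime}$ enter. For $M\in\mathcal{M}$, applying $U$ to the component $\Psi\epsilon^{\prime}_{\Phi M}$ and using $U\Psi=U^{\prime}\Psi^{\prime}$ gives $U\Psi\epsilon^{\prime}_{\Phi M}=U^{\prime}(\Psi^{\prime}\epsilon^{\prime}_{\Phi M})$, which is an isomorphism because $\Psi^{\prime}\epsilon^{\prime}$ is; since $U$ is conservative, $\Psi\epsilon^{\prime}_{\Phi M}$ is itself an isomorphism. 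Hence $\Psi\epsilon^{\prime}\Phi$ is a natural isomorphism and $\widehat{\eta}=(\Psi\epsilon^{\prime}\Phi)^{-1}\circ\eta$ is a well-defined natural isomorphism $\mathrm{id}_{\mathcal{M}}\to(\Psi\Phi^{\prime})(\Psi^{\prime}\Phi)$. The symmetric argument, applying $U^{\prime}$ to $\Psi^{\prime}\epsilon_{\Phi^{\prime}M^{\prime}}$, rewriting it as $U\Psi\epsilon_{\Phi^{\prime}M^{\prime}}$ via $U^{\prime}\Psi^{\prime}=U\Psi$, using that $\Psi\epsilon$ is an isomorphism, and invoking conservativity of $U^{\prime}$, shows that $\Psi^{\prime}\epsilon\Phi^{\prime}$ is a natural isomorphism, so $\widehat{\epsilon}=(\eta^{\prime})^{-1}\circ\Psi^{\prime}\epsilon\Phi^{\prime}$ is a natural isomorphism $(\Psi^{\prime}\Phi)(\Psi\Phi^{\prime})\to\mathrm{id}_{\mathcal{M}^{\prime}}$.

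It then remains to verify that $(\widehat{\eta},\widehat{\epsilon})$ satisfies the triangle identities for $\Psi^{\prime}\Phi\dashv\Psi\Phi^{\prime}$, and I expect this to be the main, though purely formal, obstacle. The verification should be a diagram chase that uses only naturality of $\eta,\eta^{\prime},\epsilon,\epsilon^{\prime}$ and the triangle identities of $(\Phi,\Psi)$ and $(\Phi^{\prime},\Psi^{\prime})$: after substituting the explicit formulas for $\widehat{\eta}$ and $\widehat{\epsilon}$, one cancels the invertible factors $\eta,\eta^{\prime},\Psi\epsilon,\Psi^{\prime}\epsilon^{\prime}$ and reduces each composite to an identity. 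The bookkeeping can be halved by the standard fact that when both candidate unit and counit are isomorphisms, one triangle identity forces the other; indeed, applying the left adjoint to the composite in question and using naturality of the relevant counit collapses it to an identity, and full faithfulness of that left adjoint (a consequence of the corresponding unit being invertible) then transports this back. Once a single triangle identity is in place, $(\Psi^{\prime}\Phi,\Psi\Phi^{\prime})$ is an adjunction with invertible unit $\widehat{\eta}$ and counit $\widehat{\epsilon}$, hence the asserted adjoint equivalence.
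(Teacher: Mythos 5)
The paper does not actually prove this lemma --- it is imported verbatim (in slightly modified form) from Berger's article, cited as [Ber, Lemma 1.2] --- so there is no in-text argument to compare yours against; I can only judge the proposal on its own merits, and on those it is essentially correct. Your first two steps are complete and identify the crux accurately: $\Psi\epsilon=(\eta\Psi)^{-1}$ and $\Psi'\epsilon'=(\eta'\Psi')^{-1}$ are isomorphisms by the triangle identities of the given coreflections, and combining $U\Psi=U'\Psi'$ with conservativity of $U$ (resp.\ $U'$) correctly upgrades this to invertibility of $\Psi\epsilon'\Phi$ (resp.\ $\Psi'\epsilon\Phi'$), so that $\widehat{\eta}$ and $\widehat{\epsilon}$ are well-defined natural isomorphisms and $\Psi'\Phi$, $\Psi\Phi'$ are already quasi-inverse equivalences. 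The triangle identities, which you only sketch, do go through exactly as you predict: for instance $\widehat{\epsilon}\,\Psi'\Phi\circ\Psi'\Phi\,\widehat{\eta}=\mathrm{Id}_{\Psi'\Phi}$ reduces, after applying naturality of $\epsilon$ to the morphism $\epsilon'_{\Phi M}$ and cancelling the invertible factor $\Psi'\Phi\Psi\epsilon'_{\Phi M}$, to the two original triangle identities $\Psi'\epsilon'_{\Phi M}\circ\eta'_{\Psi'\Phi M}=\mathrm{id}$ and $\Psi'\left(\epsilon_{\Phi M}\circ\Phi\eta_M\right)=\mathrm{id}$. One small repair is needed in your reduction of the second triangle identity to the first: you justify it by ``full faithfulness of the left adjoint, a consequence of the corresponding unit being invertible,'' but that lemma presupposes an adjunction, which is precisely what is being established at that point. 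The non-circular argument is that $\Psi'\Phi$ is fully faithful because $\widehat{\eta}$ and $\widehat{\epsilon}$ are already known to be isomorphisms (so it is one half of an equivalence); then applying $\Psi'\Phi$ to the composite $\Psi\Phi'\widehat{\epsilon}\circ\widehat{\eta}\,\Psi\Phi'$ collapses it to an identity by naturality of $\widehat{\epsilon}$ together with the already-verified triangle, and faithfulness transports this back. With that adjustment the proof is complete.
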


Fix an arbitrary field $\Bbbk $ such that $\mathrm{char}\left( \Bbbk \right)$
is a prime $p$ and recall that $\mathfrak{M}$ denotes the category of vector
spaces over $\Bbbk $.

\begin{definition}
(due to Jacobson, see \cite[page 210]{Jac}) A \textit{restricted Lie algebra
over $\Bbbk$} (also called \textit{$p$-Lie algebra} by some authors) is a
triple $(L,[-,-], {-}^{[p]})$ consisting of a (ordinary) Lie algebra $%
(L,[-,-])$ (i.e. a $\Bbbk$-vector space $L$ endowed with a $\Bbbk$-bilinear
map $[-,-]$ satisfying the antisymmetry and Jacobi condition) and a
(set-theoretical) map ${-}^{[p]}:L\to L$ satisfying%
\begin{eqnarray*}
(\alpha x)^{[p]} &=& {\alpha^{p}x^{[p]}} \text{~for~all~} x\in L, \alpha\in
\Bbbk; \\
\mathrm{ad}(x^{[p]})&=&(\mathrm{ad}(x))^{p} \text{~for~all~} x\in L; \\
(x+y)^{[p]}&=&x^{[p]}+y^{[p]}+ s(x,y) \text{~for~all~} x,y\in L,
\end{eqnarray*}%
where $\mathrm{ad}$ is the adjoint representation of $L$;
\begin{equation*}
\mathrm{ad}:L\to \mathrm{End}(L), x\mapsto {\mathrm{ad}}_{x} ~~\mathrm{where}%
~{\mathrm{ad}}_{x}(y)=[x,y],
\end{equation*}
and $s(x,y)= \sum_{i=1}^{p-1}\frac{s_{i}(x,y)}{i}$, where $s_{i}(x,y)$ is
the coefficient of ${\beta}^{i-1}$ in the expansion of $({\mathrm{ad}}(\beta
x+y))^{p-1}(x)$. \newline
A map $f:(L,[-,-], {-}^{[p]})\to (L^{\prime },[-,-]^{\prime }, {-}%
^{[p^{\prime }]})$ is a morphism of restricted Lie algebras if $f$ is a
morphism of (ordinary) Lie algebras $f:(L,[-,-])\to (L^{\prime
},[-,-]^{\prime })$ such that $f(x^{[p]})=(f(x))^{[p^{\prime }]},$ for all $%
x\in L$. \newline
The category of restricted Lie algebras with their morphisms will be denoted
by $\mathsf{{Lie}_{p}}$.
\end{definition}

There is an adjunction $\left({\widetilde{\mathfrak{u}}}:\mathsf{{Lie}_{p}}%
\rightarrow \mathsf{Bialg}(\mathfrak{M}),\mathcal{P}:\mathsf{Bialg}(%
\mathfrak{M})\rightarrow \mathsf{{Lie}_{p}}\right) $, given by the following
functors (see \cite{Grunenfelder-Phd} or \cite[Appendix]%
{Michaelis-ThePrimitives}, e.g.):

\begin{itemize}
\item $\widetilde{\mathfrak{u}}: \mathsf{{Lie}_{p}}\to \mathsf{Bialg}(%
\mathfrak{M})$; the restricted universal enveloping algebra functor. \newline
Explicitly, $\widetilde{\mathfrak{u}}(L,[-,-],{-}^{[p]})=\frac{\widetilde{U}%
(L,[-,-])}{I}$, where $I$ is the ideal in $\widetilde{U}(L,[-,-])$ generated
by elements of the form $x^p-x^{[p]}$.

\item $\mathcal{P}: \mathsf{Bialg}(\mathfrak{M})\to \mathsf{{Lie}_{p}}$; the
restricted primitive functor. \newline
Explicitly, for $B\in \mathsf{Bialg}(\mathfrak{M})$, the space $P(B)$
becomes a Lie algebra for the commutator bracket $[-,-]$ and can moreover be
endowed with the map ${-}^{[p]}$ sending an element $x\in L$ to $x^p$ such
that $\mathcal{P}(B):=(P(B),[-,-], {-}^{[p]})$ becomes a restricted Lie
algebra.
\end{itemize}

We denote by $\widetilde{\eta }_{\mathrm{L}}$ the unit and by $\widetilde{%
\epsilon }_{\mathrm{L}}$ the counit of the adjunction $( \widetilde{%
\mathfrak{u}},\mathcal{P}) $. By \cite[Theorem 6.11(1)]%
{Milnor-Moore}, we know that $\widetilde{\eta }_{\mathrm{L}}:{\mathsf{id%
}}_{{\mathsf{{Lie}_{p}}}}\rightarrow \mathcal{P}\widetilde{\mathfrak{u}}$ is
an isomorphism, see also \cite[Theorem 1]{Jacp}. We also use the notation $H_{\mathsf{{Lie}_{p}}%
}:\mathsf{{Lie}_{p}}\rightarrow \mathfrak{M}$ for the forgetful functor. We
obviously have that $H_{\mathsf{{Lie}_{p}}}\mathcal{P}=P:\mathsf{Bialg}(%
\mathfrak{M})\rightarrow \mathfrak{M}$ is the usual primitive functor (cf. (%
\ref{primitive})). \newline
Before stating the main result, we notice that in case we wish to stress the
algebra nature of objects and morphisms in $\mathsf{Alg}(\mathfrak{M})$,
resp. the bialgebra nature of objects and morphisms in $\mathsf{Bialg}(%
\mathfrak{M})$, we will do so by simply overlining, resp. over and
underlining things. Please mind as well that we denote $\eta$ (resp. $%
\widetilde{\eta}$) and $\epsilon$ (resp. $\widetilde{\epsilon}$) the unit
and counit of the adjunction $(T,\Omega)$ (resp. $(\widetilde{T},P)$).

\begin{theorem}
\label{teo:DecVec}We have the following diagram.
\begin{equation}
\xymatrixcolsep{1cm}\xymatrixrowsep{0.40cm}\xymatrix{\Bialg({\mathfrak{M}})%
\ar@<.5ex>[dd]^{P}&&\Bialg({\mathfrak{M}})\ar@<.5ex>[dd]^{P_1}|(.30)\hole%
\ar[ll]_{\id_{\Bialg({\mathfrak{M})}}}&&\Bialg({\mathfrak{M}})%
\ar@<.5ex>[dd]^{P_2}\ar[ll]_{\id_{\Bialg({\mathfrak{M}})}}\ar[dl]|{\id_{%
\Bialg({\mathfrak{M}})}}\\
&&&\Bialg({\mathfrak{M}})\ar@<.5ex>[dd]^(.30){\mathcal{P}}\ar[ulll]^(.70){%
\id_{\Bialg({\mathfrak{M}})}}\\
\mathfrak{M}\ar@<.5ex>@{.>}[uu]^{\widetilde{T}}&&\mathfrak{M}_1%
\ar@<.5ex>@{.>}[uu]^{ \widetilde{T}_1}|(.70)\hole
\ar[ll]_{U_{0,1}}&&\mathfrak{M}_2 \ar@<.5ex>@{.>}[uu]^{\widetilde{T}_2}
\ar[ll]_(.30){U_{1,2}}|\hole \ar[dl]^{\Lambda} \\
&&&\RLie\ar@<.5ex>@{.>}[uu]^(.70){\widetilde{\mathfrak{u}}}\ar[ulll]^{H_{%
\RLie}}}  \label{diag:Lambda}
\end{equation}

The functor $P$ is comparable so that we can use the notation of Definition %
\ref{def:comparable}. There is a functor $\Lambda :\mathfrak{M}%
_{2}\rightarrow \mathsf{{Lie}_{p}}$ such that $\Lambda \circ P_{2}=\mathcal{P%
}$ and $H_{\mathsf{{Lie}_{p}}}\circ \Lambda =U_{0,2}.$

\begin{itemize}
\item The adjunction $( \widetilde{T}_{1},P_{1}) $ is idempotent, we can
choose $\widetilde{T}_{2}:=\widetilde{T}_{1}U_{1,2}$, $\pi _{2}={\mathsf{Id}}%
_{\widetilde{T}_{2}}$ and $\widetilde{T}_{2}$ is full and faithful i.e. $%
\widetilde{\eta }_{2}$ is an isomorphism. The functor $P$ has a monadic
decomposition of monadic length at most two.


\item The pair $(P_{2}\widetilde{\mathfrak{u}},\Lambda )$ is an adjoint
equivalence of categories with unit $\widetilde{\eta }_{\mathrm{L}}$ and
counit $\left( \widetilde{\eta }_{2}\right) ^{-1}\circ P_{2}\left( \widetilde{%
\epsilon }_{\mathrm{L}}\widetilde{T}_{2}\circ \widetilde{\mathfrak{u}}%
\Lambda \widetilde{\eta }_{2}\right)$.
\end{itemize}
\end{theorem}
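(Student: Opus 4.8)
The plan is to apply Berger's Lemma (Lemma~\ref{lem:Ber}) to the diagram~(\ref{diag:Lambda}). First I would set up the data needed to instantiate Berger's abstract square. The base category $\mathcal{E}$ will be $\mathfrak{M}$, while the two ``top'' categories $\mathcal{M}$ and $\mathcal{M}'$ will be $\mathfrak{M}_2$ and $\mathsf{{Lie}_{p}}$ respectively. The common object $\mathcal{E}'$ over which the two coreflections sit will be $\Bialg(\mathfrak{M})$. Concretely I would take $\Psi := P_2 : \Bialg(\mathfrak{M}) \to \mathfrak{M}_2$ with left adjoint $\Phi := \widetilde{T}_2$, and $\Psi' := \mathcal{P} : \Bialg(\mathfrak{M}) \to \mathsf{{Lie}_{p}}$ with left adjoint $\Phi' := \widetilde{\mathfrak{u}}$. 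The two ``legs'' down to $\mathfrak{M}$ are $U := U_{0,2} : \mathfrak{M}_2 \to \mathfrak{M}$ and $U' := H_{\mathsf{{Lie}_{p}}} : \mathsf{{Lie}_{p}} \to \mathfrak{M}$. Then Berger's Lemma produces the adjoint equivalence $(\Psi' \Phi, \Psi \Phi') = (\mathcal{P}\widetilde{T}_2, P_2 \widetilde{\mathfrak{u}})$, and since $\Lambda$ is by construction the functor with $\Lambda \circ P_2 = \mathcal{P}$ and $H_{\mathsf{{Lie}_{p}}}\circ \Lambda = U_{0,2}$, I would identify $\mathcal{P}\widetilde{T}_2$ with $\Lambda P_2 \widetilde{T}_2 = \Lambda$, using that $P_2 \widetilde{T}_2 = P_2 \Phi$ is naturally isomorphic to the identity via $\widetilde{\eta}_2$ (which the first bullet already asserts is an isomorphism).

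The key verifications are the three hypotheses of Berger's Lemma. First, $U' \circ \Psi' = U \circ \Psi$, i.e. $H_{\mathsf{{Lie}_{p}}}\circ \mathcal{P} = U_{0,2}\circ P_2$; both sides equal the underlying primitive-space functor $P : \Bialg(\mathfrak{M}) \to \mathfrak{M}$, the left-hand equality being recorded in the text ($H_{\mathsf{{Lie}_{p}}}\mathcal{P} = P$) and the right-hand one following from $_{RL}U \circ K = R$ applied along the comparison tower (so $U_{0,2}\circ P_2 = P$). Second, $U = U_{0,2}$ and $U' = H_{\mathsf{{Lie}_{p}}}$ must be conservative: the forgetful functor from restricted Lie algebras reflects isomorphisms because a bijective morphism of restricted Lie algebras has inverse automatically respecting bracket and $(-)^{[p]}$, and $U_{0,2}$ is conservative as a composite of forgetful functors out of Eilenberg--Moore categories, each of which reflects isomorphisms. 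Third, $\Psi$ and $\Psi'$ must be coreflections with fully faithful left adjoints: for $\Psi' = \mathcal{P}$ this is exactly the statement that $\widetilde{\eta}_{\mathrm{L}} : \mathsf{id}_{\mathsf{{Lie}_{p}}} \to \mathcal{P}\widetilde{\mathfrak{u}}$ is an isomorphism, which is cited from \cite[Theorem 6.11(1)]{Milnor-Moore} and \cite[Theorem 1]{Jacp}; for $\Psi = P_2$ it is the content of the first bullet, namely that $\widetilde{T}_2 = \widetilde{T}_1 U_{1,2}$ is full and faithful, equivalently $\widetilde{\eta}_2$ is invertible.

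I expect the main obstacle to be bookkeeping rather than any deep difficulty: once the hypotheses are in place, Berger's Lemma hands over the equivalence and the explicit unit and counit, and the real work is matching Berger's formulas to the ones in the statement. In Berger's notation the unit $\widehat{\eta}$ of $(\Psi'\Phi, \Psi\Phi')$ is $\eta' = \widetilde{\eta}_{\mathrm{L}}$ up to the identification $\Psi'\Phi \cong \Lambda$, so I would check that transporting the equivalence $(\mathcal{P}\widetilde{T}_2, P_2\widetilde{\mathfrak{u}})$ along the isomorphism $\mathcal{P}\widetilde{T}_2 \cong \Lambda$ (induced by $\widetilde{\eta}_2^{-1}$) turns Berger's unit into precisely $\widetilde{\eta}_{\mathrm{L}}$ and Berger's counit $\widehat{\epsilon} = (\eta')^{-1}\circ \Psi'\epsilon\Phi'$ into the stated composite $(\widetilde{\eta}_2)^{-1}\circ P_2(\widetilde{\epsilon}_{\mathrm{L}}\widetilde{T}_2 \circ \widetilde{\mathfrak{u}}\Lambda\widetilde{\eta}_2)$. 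This amounts to a careful but routine diagram chase substituting $\epsilon = \widetilde{\epsilon}_{\mathrm{L}}$ (the counit of $(\widetilde{\mathfrak{u}},\mathcal{P})$) and unwinding the definition of $\Lambda$ through $P_2$ and $\widetilde{\eta}_2$; the only place demanding genuine care is keeping the whiskering of natural transformations consistent with the identifications $P_2\widetilde{T}_2 \cong \mathsf{id}$ and $\Lambda P_2 \cong \mathcal{P}$.
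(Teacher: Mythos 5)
Your overall strategy is the paper's own: the proof in the paper also proceeds by applying Berger's Lemma~\ref{lem:Ber} to the square with apex $\Bialg(\mathfrak{M})$, legs $\mathcal{P}$ and $P_{2}$, left adjoints $\widetilde{\mathfrak{u}}$ and $\widetilde{T}_{2}$, and base $\mathfrak{M}$, verifying exactly the three hypotheses you list and then transporting the resulting equivalence along the isomorphism $\Lambda\widetilde{\eta}_{2}:\Lambda\rightarrow\Lambda P_{2}\widetilde{T}_{2}=\mathcal{P}\widetilde{T}_{2}$. However, there is a genuine gap: you treat $\Lambda$ as given ``by construction'' with the strict equalities $\Lambda\circ P_{2}=\mathcal{P}$ and $H_{\mathsf{{Lie}_{p}}}\circ\Lambda=U_{0,2}$, but the existence of such a functor is itself an assertion of the theorem and is the main new content of the proof. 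One cannot simply take $\Lambda:=\mathcal{P}\widetilde{T}_{2}$, since $H_{\mathsf{{Lie}_{p}}}\mathcal{P}\widetilde{T}_{2}=U_{0,1}P_{1}\widetilde{T}_{1}U_{1,2}$ is only isomorphic to $U_{0,2}$, not equal to it. The paper constructs $\Lambda$ explicitly: for $V_{2}=((V_{0},\mu_{0}),\mu_{1})\in\mathfrak{M}_{2}$ it sets $[x,y]:=\mu_{0}(x\otimes y-y\otimes x)$ and $x^{[p]}:=\mu_{0}(x^{\otimes p})$, checks functoriality (that $f_{0}$ preserves both operations, using that $f_{0}$ is a morphism of $P\widetilde{T}$-algebras), and then computes $P_{1}(\underline{\overline{B}})=(P(\underline{\overline{B}}),P\widetilde{\epsilon}_{\underline{\overline{B}}})$ to see that the induced operations on $P_{2}(\underline{\overline{B}})$ are the commutator and the $p$-th power, which is precisely the identity $\Lambda\circ P_{2}=\mathcal{P}$ that your final identification step relies on. Without this construction that step is vacuous.

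Two smaller points. First, your instantiation of Berger's Lemma mirrors the paper's (you take $\mathcal{M}=\mathfrak{M}_{2}$, $\Psi=P_{2}$, whereas the paper takes $\mathcal{M}=\mathsf{{Lie}_{p}}$, $\Psi=\mathcal{P}$); with your assignment the left adjoint produced is $\mathcal{P}\widetilde{T}_{2}$ and the unit $\widehat{\eta}$ lives on $\mathfrak{M}_{2}$, so it cannot become $\widetilde{\eta}_{\mathrm{L}}$ under any identification --- to obtain the stated unit on $\mathsf{{Lie}_{p}}$ you must either swap the roles as the paper does or additionally invert the counit, and in either case $\widetilde{\eta}_{\mathrm{L}}$ arises only after cancelling $\mathcal{P}\widetilde{\epsilon}_{2}\widetilde{\mathfrak{u}}$ against $\Lambda\widetilde{\eta}_{2}P_{2}\widetilde{\mathfrak{u}}$ via a triangle identity, a computation the paper carries out and you should not wave away. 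Second, you invoke the first bullet (idempotency of $(\widetilde{T}_{1},P_{1})$, the choice $\widetilde{T}_{2}=\widetilde{T}_{1}U_{1,2}$, invertibility of $\widetilde{\eta}_{2}$) as if it were a hypothesis, but it is part of the statement being proved; it should be discharged by citing \cite[Theorem 3.4]{AGM-MonadicLie1} and \cite[Proposition 2.3]{AGM-MonadicLie1}, as the paper does.
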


\begin{proof}
By \cite[Theorem 3.4]{AGM-MonadicLie1}, the functor $P$ has monadic
decomposition of monadic length at most $2$. Moreover, the adjunction $(
\widetilde{T}_{1},P_{1}) $ is idempotent and we can define a functor $%
\Lambda :\mathfrak{M}_{2}\rightarrow \mathsf{{Lie}_{p}}$. Indeed, letting $%
V_{2}=\left( \left( V_{0},\mu _{0}\right) ,\mu _{1}\right)$ be an object in $%
\mathfrak{M}_{2}$, we can define an object $\Lambda V_{2}\in \mathsf{{Lie}%
_{p}}$ as follows:
\begin{equation*}
\Lambda V_{2}=\left( V_{0},\left[ -,-\right] ,-^{\left[ p\right] }\right),
\end{equation*}
where $\left[ -,-\right] :V_{0}\otimes V_{0}\rightarrow V_{0}$ is defined by
setting $\left[ x,y\right] :=\mu _{0}\left( x\otimes y-y\otimes x\right)$,
for every $x,y\in V_{0}$, while $-^{\left[ p\right] }:V_{0}\rightarrow V_{0}$
is defined by setting $x^{\left[ p\right] }:=\mu _{0}\left( x^{\otimes
p}\right) ,\,$for every $x\in V_{0}.$

Let $f_{2}:V_{2}\rightarrow V_{2}^{\prime }$ be a morphism in $\mathfrak{M}%
_{2}$ and set $f_{1}:=U_{1,2}f_{2}$ and $f_{0}:=U_{0,1}f_{1}$. Then, for
every $x,y\in V_{0}$
\begin{eqnarray*}
f_{0}\left( \left[ x,y\right] \right) &=&f_{0}\mu _{0}\left( x\otimes
y-y\otimes x\right) =\mu _{0}^{\prime }\left( P\widetilde{T}f_{0}\right)
\left( x\otimes y-y\otimes x\right) \\
&=&\mu _{0}^{\prime }\left( \left( \widetilde{T}f_{0}\right) \left( x\otimes
y-y\otimes x\right) \right) =\mu _{0}^{\prime }\left[ \left( f_{0}\left(
x\right) \otimes f_{0}\left( y\right) -f_{0}\left( y\right) \otimes
f_{0}\left( x\right) \right) \right] \\
&=&\left[ f_{0}\left( x\right) ,f_{0}\left( y\right) \right] ^{\prime }
\end{eqnarray*}%
and
\begin{eqnarray*}
f_{0}\left( x^{\left[ p\right] }\right) &=&f_{0}\mu _{0}\left( x^{\otimes
p}\right) =\mu _{0}^{\prime }\left( P\widetilde{T}f_{0}\right) \left(
x^{\otimes p}\right) =\mu _{0}^{\prime }\left( \left( \widetilde{T}%
f_{0}\right) \left( x^{\otimes p}\right) \right) \\
&=&\mu _{0}^{\prime }\left( f_{0}\left( x\right) ^{\otimes p}\right)
=f_{0}\left( x\right) ^{\left[ p\right] ^{\prime }}.
\end{eqnarray*}%
Thus $f_{2}$ induces a unique morphism $\Lambda f_{2}$ such that $H_{\mathsf{%
{Lie}_{p}}}(\Lambda f_{2})=f_{0}.$ This defines a functor $\Lambda :%
\mathfrak{M}_{2}\rightarrow \mathsf{{Lie}_{p}}$, as claimed above. By
construction $H_{{\mathsf{{Lie}_{p}}}}\circ \Lambda =U_{0,2}.$ Moreover,
\begin{equation*}
\left( \Lambda \circ P_{2}\right) \left( \underline{\overline{B}}\right)
=\Lambda \left( P_{2}\left( \underline{\overline{B}}\right) \right) .
\end{equation*}%
In order to proceed, we have to compute $\Lambda \left( P_{2}\left(
\underline{\overline{B}}\right) \right)$. We have that
\begin{equation*}
P_{1}\left( \underline{\overline{B}}\right) =\left( P\left( \underline{%
\overline{B}}\right) ,P{\widetilde{\epsilon }}_{\underline{\overline{B}}}:P%
\widetilde{T}P\left( \underline{\overline{B}}\right) \rightarrow P\left(
\underline{\overline{B}}\right) \right)
\end{equation*}
so the brackets $\left[ -,-\right] $ and $-^{\left[ p\right] }$ are, for
every $x,y\in P\left( \underline{\overline{B}}\right) $, given by the
following:
\begin{eqnarray*}
\left[ x,y\right] &=&\left( P{\widetilde{\epsilon }}_{\underline{\overline{B}%
}}\right) \left( x\otimes y-y\otimes x\right) =\left( {\widetilde{\epsilon }}%
_{ \underline{\overline{B}}}\right) \left( x\otimes y-y\otimes x\right)
=\left( {\epsilon}_{ \overline{B}}\right) \left( x\otimes y-y\otimes
x\right) =xy-yx, \\
x^{\left[ p\right] } &=&\left( P{\widetilde{\epsilon }}_{\underline{%
\overline{B}}}\right) \left( x^{\otimes p}\right) =\left( {\widetilde{%
\epsilon }}_{\underline{\overline{B}}}\right) \left( x^{\otimes p}\right)
=\left( {\epsilon}_{ \overline{B}}\right) \left( x^{\otimes p}\right) =x^{p}
\end{eqnarray*}%
so that $\left( \Lambda \circ P_{2}\right) \left( \underline{\overline{B}}%
\right) =\mathcal{P}\left( \underline{\overline{B}}\right)$. For the
morphisms, we have%
\begin{equation*}
\left( H_{{\mathsf{{Lie}_{p}}}}\right) \Lambda P_{2}\left( \underline{%
\overline{f}}\right) =U_{0,2}P_{2}\left( \underline{\overline{f}}\right)
=P\left( \underline{\overline{f}}\right) =\left( H_{{\mathsf{{Lie}_{p}}}%
}\right) \mathcal{P}\left( \underline{\overline{f}}\right) .
\end{equation*}%
Since $H_{{\mathsf{{Lie}_{p}}}}$ is faithful, we conclude that $\Lambda
P_{2}\left( \underline{\overline{f}}\right) =\mathcal{P}\left( \underline{%
\overline{f}}\right) $ and hence $\Lambda \circ P_{2}=\mathcal{P}$.
\\Now, since the adjunction $(\widetilde{T}_{1},P_{1})$ is idempotent, by \cite%
[Proposition 2.3]{AGM-MonadicLie1}, we can choose $\widetilde{T}_{2}:=%
\widetilde{T}_{1}U_{1,2}$ with $\widetilde{\eta }_{1}U_{1,2}=U_{1,2}%
\widetilde{\eta }_{2}$ and $\widetilde{\epsilon }_{1}=\widetilde{\epsilon }%
_{2}$. Since $\widetilde{T}_{2}$ is full and faithful, we have that $%
\widetilde{\eta }_{2}$ is an isomorphism. We already observed that $\widetilde{\eta }_{\mathrm{L}}:{\mathsf{id%
}}_{{\mathsf{{Lie}_{p}}}}\rightarrow \mathcal{P}\widetilde{\mathfrak{u}}$ is
an isomorphism.

Since $U_{0,2}=H_{\mathsf{{Lie}_{p}}}\circ \Lambda $ and $U_{0,2}$ is
conservative, so is $\Lambda .$
We can apply Lemma \ref{lem:Ber} to the following diagram%
\begin{equation*}
\xymatrixrowsep{15pt}\xymatrixcolsep{20pt} \xymatrix{&\mathsf{Bialg}(%
\mathfrak{M}) \ar@<.5ex>[dl]^{\mathcal{P}}\ar@<.5ex>[dr]^{P_{2}}\\
\mathsf{{Lie}_{p}}\ar@{.>}@<.5ex>[ur]^{\widetilde{\mathfrak{u}}}\ar[dr]_{H_{%
\mathsf{{Lie}_{p}}}}&&{\mathfrak{M}}_{2}\ar@{.>}@<.5ex>[ul]^{%
\widetilde{T}_{2}}\ar@<.5ex>[dl]^{U_{0,2}}\\ &\mathfrak{M}}
\end{equation*}%
to deduce that $\left( P_{2}\widetilde{\mathfrak{u}},\mathcal{P}\widetilde{T}%
_{2}\right) $ is an adjoint equivalence with unit $\widehat{\eta }$ and counit
$\widehat{\epsilon }$ defined by  $\left( \widehat{\eta }%
\right) ^{-1}:=\widetilde{\eta }_{\mathrm{L}}^{-1}\circ \mathcal{P}%
\widetilde{\epsilon }_{2}\widetilde{\mathfrak{u}}$ and $\widehat{%
\epsilon }:=\widetilde{\eta }_{2}^{-1}\circ P_{2}\widetilde{\epsilon }_{%
\mathrm{L}}\widetilde{T}_{2}.$ By the first part of the proof, we have $\mathcal{P}=\Lambda P_{2}.$ Thus we can use the
isomorphism $\Lambda \widetilde{\eta }_{2}:\Lambda \rightarrow \Lambda P_{2}%
\widetilde{T}_{2}=\mathcal{P}\widetilde{T}_{2}$ to replace $\mathcal{P}%
\widetilde{T}_{2}$ by $\Lambda $ in the adjunction.
Thus we obtain that the pair $(P_{2}\widetilde{\mathfrak{u}},\Lambda )$ is an equivalence
of categories with unit $\widetilde{\eta }_{\mathrm{L}}$ and counit $\widetilde{\eta }_{2}^{-1}\circ P_{2}\left( \widetilde{\epsilon }_{\mathrm{L}}\widetilde{T}_{2}\circ \widetilde{\mathfrak{u}}%
\Lambda \widetilde{\eta }_{2}\right) $ by the following computations
\begin{gather*}
\left( \widehat{\eta }\right) ^{-1}\circ \Lambda \widetilde{\eta }_{2}P_{2}%
\widetilde{\mathfrak{u}}=\widetilde{\eta }_{\mathrm{L}}^{-1}\circ \mathcal{P}%
\widetilde{\epsilon }_{2}\widetilde{\mathfrak{u}}\circ \Lambda \widetilde{%
\eta }_{2}P_{2}\widetilde{\mathfrak{u}}=\widetilde{\eta }_{\mathrm{L}%
}^{-1}\circ \Lambda P_{2}\widetilde{\epsilon }_{2}\widetilde{\mathfrak{u}}%
\circ \Lambda \widetilde{\eta }_{2}P_{2}\widetilde{\mathfrak{u}}=\widetilde{%
\eta }_{\mathrm{L}}^{-1},\\
\widehat{\epsilon }\circ P_{2}\widetilde{\mathfrak{u}}\Lambda \widetilde{%
\eta }_{2}=\widetilde{\eta }_{2}^{-1}\circ P_{2}\widetilde{\epsilon }_{%
\mathrm{L}}\widetilde{T}_{2}\circ P_{2}\widetilde{\mathfrak{u}}\Lambda
\widetilde{\eta }_{2}=\widetilde{\eta }_{2}^{-1}\circ P_{2}\left( \widetilde{%
\epsilon }_{\mathrm{L}}\widetilde{T}_{2}\circ \widetilde{\mathfrak{u}}%
\Lambda \widetilde{\eta }_{2}\right) .
\end{gather*}
\end{proof}

\section{An alternative approach via adjoint squares}

The main aim of this section is to give an alternative approach to Theorem \ref{teo:DecVec} by means of some results that -in our opinion- could have an interest in their own right.

\begin{definition}
\label{def:AdjSquare}Recall from \cite[Definition I,6.7, page 144]{Gray-FormalCat}
that an \textit{adjoint square} consists of a (not necessarily commutative)
diagram of functors as depicted below ($(L,R)$ and $(L^{\prime },R^{\prime
}) $ being adjunctions with units $\eta$ resp. $\eta^{\prime }$ and counits $%
\epsilon$ resp. $\epsilon^{\prime }$) together with a matrix of natural
transformations ``inside'':
\begin{equation}
\begin{array}{ccc}
\xymatrixrowsep{25pt}\xymatrixcolsep{3cm} \xymatrix{\ar@{} [dr]
|{\left(\begin{array}{cc} \zeta _{11} & \zeta _{12} \\ \zeta _{21} & \zeta
_{22}\end{array}\right)}
\mathcal{A}\ar[r]^F\ar@<.5ex>[d]^R&\mathcal{A}^{\prime
}\ar@<.5ex>[d]^{R^\prime}\\
\mathcal{B}\ar@<.5ex>[u]^{L}\ar[r]_G&\mathcal{B}^{\prime
}\ar@<.5ex>[u]^{L^\prime}} &  &
\end{array}%
\qquad
\begin{array}{cc}
\zeta _{11}:L^{\prime }G\rightarrow FL, & \zeta _{12}:L^{\prime
}GR\rightarrow F, \\
\zeta _{21}:G\rightarrow R^{\prime }FL, & \zeta _{22}:GR\rightarrow
R^{\prime }F,%
\end{array}
\label{diag:AdjSquare}
\end{equation}%
These ingredients are required to be subject to the following equalities:%
\begin{eqnarray}
\zeta _{11} &=&\zeta _{12}L\circ L^{\prime }G\eta =\epsilon ^{\prime
}FL\circ L^{\prime }\zeta _{21}=\epsilon ^{\prime }FL\circ L^{\prime }\zeta
_{22}L\circ L^{\prime }G\eta ,  \label{Adj1} \\
\zeta _{12} &=&F\epsilon \circ \zeta _{11}R=\epsilon ^{\prime }F\epsilon
\circ L^{\prime }\zeta _{21}R=\epsilon ^{\prime }F\circ L^{\prime }\zeta
_{22},  \label{Adj2} \\
\zeta _{21} &=&R^{\prime }\zeta _{11}\circ \eta ^{\prime }G=R^{\prime }\zeta
_{12}L\circ \eta ^{\prime }G\eta =\zeta _{22}L\circ G\eta ,  \label{Adj3} \\
\zeta _{22} &=&R^{\prime }F\epsilon \circ R^{\prime }\zeta _{11}R\circ \eta
^{\prime }GR=R^{\prime }\zeta _{12}\circ \eta ^{\prime }GR=R^{\prime
}F\epsilon \circ \zeta _{21}R.  \label{Adj4}
\end{eqnarray}%
We call such natural transformations \textit{transposes} of each other. If
only one of the entries of the matrix is given, its transposes can be
defined by means of the equalities above.
\end{definition}

\begin{example}
\label{ex:Beck}Let $\left( L:\mathcal{B}\rightarrow \mathcal{A},R:\mathcal{A}%
\rightarrow \mathcal{B}\right) $ be an adjunction with unit $\eta $ and
counit $\epsilon $. Assume that the comparison functor $R_{1}:\mathcal{A}%
\rightarrow \mathcal{B}_{1}$ has a left adjoint $L_{1}$ with unit $\eta _{1}$
and counit $\epsilon _{1}$. We then have an adjoint square
\begin{equation*}
\begin{array}{ccc}
\xymatrixrowsep{25pt}\xymatrixcolsep{3cm}\xymatrix{\ar@{} [dr]
|{\left(\begin{array}{cc} \pi _{11}& \pi _{12} \\ \pi _{21} & \pi
_{22}\end{array}\right)}
\mathcal{A}\ar[r]^{\id_{\Aa}}\ar@<.5ex>[d]^{R_1}&\mathcal{A}%
\ar@<.5ex>[d]^{R}\\
{\mathcal{B}}_{1}\ar@<.5ex>[u]^{L_{1}}\ar[r]_G&\mathcal{B}\ar@<.5ex>[u]^{L}}
&  &
\end{array}%
\end{equation*}%
%
%
%
%
%
where
\begin{equation*}
\pi _{11}\overset{(\ref{Adj1})}{=}\epsilon L_{1}\circ LU_{01}\eta
_{1}:LU_{01}\rightarrow L_{1}~\hspace{0.2cm}\text{and}~\hspace{0.2cm}\pi
_{22}=\mathrm{\mathsf{Id}}_{U_{01}R_{1}}
\end{equation*}%
so that, by the proof of \cite[Theorem 1]{Beck}, $\pi _{11}$ is the
canonical projection defining $L_{1}.$ More explicitly, for every $\left(
B,\mu \right) \in \mathcal{B}_{1}$ we have the following coequalizer of a
reflexive pair in $\mathcal{A}$%
\begin{equation*}
\xymatrix{ LRLB \ar@<.5ex>[rr]^-{L\mu} \ar@<-.5ex>[rr]_-{\epsilon_{LB}}&&
LB=LU_{01}\left( B,\mu \right) \ar@<.5ex>[rr]^-{{\pi}_{11} (B,\mu)}
&&L_{1}\left( B,\mu \right)}.
\end{equation*}%
%
%
%
%
%
As a consequence we get%
\begin{eqnarray}
&&\epsilon _{1}\circ \pi _{11}R_{1}\overset{(\ref{Adj2})}{=}\epsilon ,
\label{Eps1} \\
&&R\pi _{11}\circ \eta U_{01}\overset{(\ref{Adj3})}{=}U_{01}\eta _{1}.
\notag
\end{eqnarray}
\end{example}

\begin{remark}
\label{rem:orizcomp}Given two adjoint squares%
\begin{equation*}
\begin{array}{ccc}
\xymatrixrowsep{25pt}\xymatrixcolsep{3cm} \xymatrix{\ar@{} [dr]
|{\left(\begin{array}{cc} \zeta _{11} & \zeta _{12} \\ \zeta _{21} & \zeta
_{22}\end{array}\right)}
\mathcal{A}\ar[r]^F\ar@<.5ex>[d]^R&\mathcal{A}^{\prime
}\ar@<.5ex>[d]^{R^\prime}\\
\mathcal{B}\ar@<.5ex>[u]^{L}\ar[r]_G&\mathcal{B}^{\prime
}\ar@<.5ex>[u]^{L^\prime}} &  &
\end{array}
\quad \text{and}\quad
\begin{array}{ccc}
\xymatrixrowsep{25pt}\xymatrixcolsep{3cm} \xymatrix{\ar@{} [dr]
|{\left(\begin{array}{cc} {{\zeta}'}_{11} & {{\zeta}'}_{12} \\
{{\zeta}'}_{21} & {{\zeta}'}_{22}\end{array}\right)}
{\mathcal{A}}'\ar[r]^{F'}\ar@<.5ex>[d]^{R'}&{{\mathcal{A}}''}%
\ar@<.5ex>[d]^{R''}\\
\mathcal{B}'\ar@<.5ex>[u]^{L'}\ar[r]_{G'}&{\mathcal{B}''}\ar@<.5ex>[u]^{L''}}
&  &
\end{array}%
\end{equation*}%
their horizontal composition is given by%
\begin{equation*}
\begin{array}{ccc}
\xymatrixrowsep{30pt}\xymatrixcolsep{5cm} \xymatrix{\ar@{} [dr]
|{\left(\begin{array}{cc} \zeta _{11}^{\prime }\ast \zeta _{11} & \zeta
_{12}^{\prime }\ast \zeta _{12} \\ \zeta _{21}^{\prime }\ast \zeta _{21} &
\zeta _{22}^{\prime }\ast \zeta _{22}\end{array}\right)}
\mathcal{A}\ar[r]^{F'F}\ar@<.5ex>[d]^R&{\mathcal{A}''}\ar@<.5ex>[d]^{R''}\\
\mathcal{B}\ar@<.5ex>[u]^{L}\ar[r]_{G'G}&{\mathcal{B}''}\ar@<.5ex>[u]^{L''}}
&  &
\end{array}
\qquad
\end{equation*}%
where%
\begin{eqnarray*}
\zeta _{11}^{\prime }\ast \zeta _{11} &=&F^{\prime }\zeta _{11}\circ \zeta
_{11}^{\prime }G, \\
\zeta _{12}^{\prime }\ast \zeta _{12} &=&\zeta _{12}^{\prime }\zeta
_{12}\circ L^{\prime \prime }G^{\prime }\eta ^{\prime }GR, \\
\zeta _{21}^{\prime }\ast \zeta _{21} &=&R^{\prime \prime }F^{\prime
}\epsilon ^{\prime }FL\circ \zeta _{21}^{\prime }\zeta _{21}, \\
\zeta _{22}^{\prime }\ast \zeta _{22} &=&\zeta _{22}^{\prime }F\circ
G^{\prime }\zeta _{22}.
\end{eqnarray*}
\end{remark}

The following result should be compared with \cite[Proposition I,6.9]%
{Gray-FormalCat}.

\begin{lemma}
\label{lem:equiv}Consider an adjoint square as in the following diagram.%
\begin{equation*}
\begin{array}{ccc}
\xymatrixrowsep{25pt}\xymatrixcolsep{3cm} \xymatrix{\ar@{} [dr]
|{\left(\begin{array}{cc} \zeta _{11} & \zeta _{12} \\ \zeta _{21} & \zeta
_{22}\end{array}\right)}
\mathcal{A}\ar[r]^F\ar@<.5ex>[d]^R&\mathcal{A}^{\prime
}\ar@<.5ex>[d]^{R^\prime}\\
\mathcal{B}\ar@<.5ex>[u]^{L}\ar[r]_G&\mathcal{B}^{\prime
}\ar@<.5ex>[u]^{L^\prime}} &  &
\end{array}%
\end{equation*}%
Assume that $F$ and $G$ are equivalences of categories. Then the following
assertions are equivalent.

\begin{itemize}
\item[(1)] $\zeta _{11}$ is an isomorphism.

\item[(2)] $\zeta _{22}$ is an isomorphism.
\end{itemize}
\end{lemma}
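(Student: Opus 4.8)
The plan is to recognise $\zeta_{11}$ and $\zeta_{22}$ as \emph{mates}: the identities \eqref{Adj3} and \eqref{Adj4} say precisely that $\zeta_{21}=R'\zeta_{11}\circ\eta'G$ and $\zeta_{22}=R'F\epsilon\circ\zeta_{21}R$, so that $\zeta_{22}$ is obtained from $\zeta_{11}$ by the usual mate construction, while \eqref{Adj1} and \eqref{Adj2} run it backwards. Since mates need not preserve invertibility, the proof has to exploit that $F$ and $G$ are equivalences, and I would do this through a Yoneda argument that converts the invertibility of $\zeta_{22,A}$ into that of $\zeta_{11,B_{0}}$ by means of the hom-set bijections of the two adjunctions together with the full faithfulness and essential surjectivity of $F$ and $G$. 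I will prove $(1)\Rightarrow(2)$ in detail; $(2)\Rightarrow(1)$ follows by the mirror-image argument (or by passing to opposite categories, where the two mates are interchanged).

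Assume $\zeta_{11}$ is an isomorphism and fix $A\in\mathcal{A}$. To see that $\zeta_{22,A}\colon GRA\to R'FA$ is invertible it suffices, by Yoneda, to show that $\mathcal{B}'(B,\zeta_{22,A})$ is bijective for every $B\in\mathcal{B}'$; since $G$ is essentially surjective, every $B$ is isomorphic to some $GB_{0}$ with $B_{0}\in\mathcal{B}$ and the bijectivity of $\mathcal{B}'(B,\zeta_{22,A})$ is transported along such an isomorphism, so I may assume $B=GB_{0}$. I then build the chain
\[
\mathcal{B}'(GB_{0},GRA)\cong\mathcal{B}(B_{0},RA)\cong\mathcal{A}(LB_{0},A)\xrightarrow{F}\mathcal{A}'(FLB_{0},FA)\cong\mathcal{A}'(L'GB_{0},FA)\cong\mathcal{B}'(GB_{0},R'FA),
\]
where the first map is $G$ on hom-sets inverted (full faithfulness of $G$), the second is the adjunction bijection of $(L,R)$, the third is $F$ on hom-sets (full faithfulness of $F$), the fourth is precomposition with $\zeta_{11,B_{0}}$ (a bijection because $\zeta_{11,B_{0}}$ is invertible), and the last is the adjunction bijection of $(L',R')$. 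Every link is a bijection, so the composite is one.

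The key step is to check that this composite is exactly $\mathcal{B}'(GB_{0},\zeta_{22,A})$, i.e. postcomposition with $\zeta_{22,A}$. Tracing a morphism $g=G\tilde{g}$ with $\tilde{g}\colon B_{0}\to RA$, and writing $h:=\epsilon_{A}\circ L\tilde{g}$, the composite sends $g$ to the transpose along $(L',R')$ of $Fh\circ\zeta_{11,B_{0}}$, namely $R'Fh\circ\zeta_{21,B_{0}}$ after using $\zeta_{21}=R'\zeta_{11}\circ\eta'G$ from \eqref{Adj3}; expanding $R'Fh=R'F\epsilon_{A}\circ R'FL\tilde{g}$ and invoking naturality of $\zeta_{21}\colon G\to R'FL$ in $\tilde{g}$ rewrites this as $R'F\epsilon_{A}\circ\zeta_{21,RA}\circ G\tilde{g}$, and finally $\zeta_{22}=R'F\epsilon\circ\zeta_{21}R$ from \eqref{Adj4} identifies it with $\zeta_{22,A}\circ G\tilde{g}=\zeta_{22,A}\circ g$. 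Hence $\mathcal{B}'(GB_{0},\zeta_{22,A})$ is bijective for all $B_{0}$, so $\zeta_{22,A}$ is invertible for every $A$, i.e. $\zeta_{22}$ is an isomorphism. I expect this bookkeeping, the careful use of the adjoint-square identities \eqref{Adj1}--\eqref{Adj4} and of naturality, to be the only delicate point; everything else is a formal chase.

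For $(2)\Rightarrow(1)$ I would run the dual argument: assuming $\zeta_{22}$ invertible, fix $B_{0}\in\mathcal{B}$ and test invertibility of $\zeta_{11,B_{0}}\colon L'GB_{0}\to FLB_{0}$ by the ``map-out'' form of Yoneda, writing an arbitrary target $A'\in\mathcal{A}'$ as $FA$ (essential surjectivity of $F$) and assembling the analogous chain
\[
\mathcal{A}'(FLB_{0},FA)\cong\mathcal{A}(LB_{0},A)\cong\mathcal{B}(B_{0},RA)\cong\mathcal{B}'(GB_{0},GRA)\cong\mathcal{B}'(GB_{0},R'FA)\cong\mathcal{A}'(L'GB_{0},FA),
\]
whose composite is precomposition with $\zeta_{11,B_{0}}$, the invertibility of $\zeta_{22,A}$ entering in the penultimate bijection and \eqref{Adj1}, \eqref{Adj2} playing the role that \eqref{Adj3}, \eqref{Adj4} played above. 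Equivalently, one may apply the already-proven implication to the adjoint square obtained in the opposite categories, in which $R^{\mathrm{op}}\dashv L^{\mathrm{op}}$ and $R'^{\mathrm{op}}\dashv L'^{\mathrm{op}}$, the functors $F^{\mathrm{op}},G^{\mathrm{op}}$ are again equivalences, and the roles of $\zeta_{11}$ and $\zeta_{22}$ are exchanged. In either formulation the hypotheses that $F$ and $G$ are equivalences are used symmetrically, which is exactly what forces the two invertibility conditions to coincide.
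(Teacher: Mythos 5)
Your proof is correct, but it takes a genuinely different route from the paper's. The paper argues inside Gray's calculus of adjoint squares: it fixes quasi-inverses $F'$ and $G'$ of $F$ and $G$, rotates the given square clockwise (twice) to produce auxiliary squares whose distinguished entries are $\zeta_{11}$ and $(\zeta_{11})^{-1}$, invokes \cite[page 153]{Gray-FormalCat} to conclude that the two rotated entries $\zeta_{11}^{c}$ and $\zeta_{11}^{rc}$ are mutually inverse, and then exhibits $\zeta_{11}$, via \eqref{Adj4}, as the composite $FL\epsilon^{\left(G',G\right)}\circ F\zeta_{11}^{c}G\circ\eta^{\left(F',F\right)}L'G$ of isomorphisms; the reverse implication comes from the dual of Gray's result. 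You avoid the rotation formalism entirely and test invertibility of $\zeta_{22,A}$ pointwise by Yoneda, threading a chain of hom-set bijections through the two adjunctions, the full faithfulness of $F$ and $G$, and the invertibility of $\zeta_{11}$; the decisive bookkeeping step, identifying the composite with postcomposition by $\zeta_{22,A}$ using \eqref{Adj3}, \eqref{Adj4} and naturality of $\zeta_{21}$, checks out, as do the reduction of an arbitrary $B$ to $GB_{0}$ by essential surjectivity and the dual chain for $(2)\Rightarrow(1)$ resting on \eqref{Adj1} and \eqref{Adj2} (or, equivalently, the passage to opposite categories, which does interchange the two mates). Your approach buys self-containedness: no quasi-inverses need be chosen and no external reference is required, only the defining identities of the adjoint square. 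The paper's approach buys brevity given the cited machinery, stays consistent with the mate-calculus viewpoint of that section, and produces an explicit formula for the inverse of $\zeta_{11}$, which your Yoneda argument leaves implicit.
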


\begin{proof}
Let $F^{\prime }:\mathcal{A}^{\prime }\rightarrow \mathcal{A}$ be a functor
such that $\left( F^{\prime },F\right) $ is a category equivalence with
(invertible) unit $\eta ^{\left( F^{\prime },F\right) }$ and counit $%
\epsilon ^{\left( F^{\prime },F\right) }.$ Similarly we use the notation $%
\eta ^{\left( G^{\prime },G\right) }$ and $\epsilon ^{\left( G^{\prime
},G\right) }$. \newline
$\left( 2\right) \Rightarrow \left( 1\right) .$ Consider the following
adjoint squares, where the diagrams on the right-hand side are obtained by
rotating clockwise by $90$ degrees the ones on the left-hand side (the upper
index $c$ stands for ``clockwise'').
\begin{eqnarray*}
&&%
\begin{array}{ccc}
\xymatrixrowsep{25pt}\xymatrixcolsep{3cm} \xymatrix{\ar@{} [dr]
|{\left(\begin{array}{cc} \zeta _{11} & \zeta _{12} \\ \zeta _{21} & \zeta
_{22}\end{array}\right)} \mathcal{A}\ar[r]^F\ar@<.5ex>[d]^R&\mathcal{A}'
\ar@<.5ex>[d]^{R^\prime}\\
\mathcal{B}\ar@<.5ex>[u]^{L}\ar[r]_G&\mathcal{B}'\ar@<.5ex>[u]^{L^\prime}} &
&
\end{array}
\quad \overset{\circlearrowright }{\mapsto }\quad
\begin{array}{ccc}
\xymatrixrowsep{27pt}\xymatrixcolsep{4cm} \xymatrix{\ar@{} [dr]
|{\left(\begin{array}{cc} \zeta _{11}^{c} & \zeta _{12}^{c} \\ \zeta
_{21}^{c} & \zeta _{22}^{c}=\zeta_{11}\end{array}\right)}
\mathcal{B}\ar[r]^L\ar@<.5ex>[d]^G&\mathcal{A}\ar@<.5ex>[d]^{F}\\
{\mathcal{B}'}\ar@<.5ex>[u]^{G'}\ar[r]_{L'}&\mathcal{A}^{\prime
}\ar@<.5ex>[u]^{F'}} &  &
\end{array}
\\
&&%
\begin{array}{ccc}
\xymatrixrowsep{27pt}\xymatrixcolsep{4cm} \xymatrix{\ar@{} [dr]
|{\left(\begin{array}{cc} \zeta _{11}^{r} & \zeta _{12}^{r} \\ \zeta
_{21}^{r} & \zeta _{22}^{r}=(\zeta_{11})^{-1}\end{array}\right)}
\mathcal{A}\ar[r]^R\ar@<.5ex>[d]^F&\mathcal{B}\ar@<.5ex>[d]^{G}\\
{\mathcal{A}'}\ar@<.5ex>[u]^{F'}\ar[r]_{R'}&\mathcal{B}^{\prime
}\ar@<.5ex>[u]^{G'}} &  &
\end{array}
\quad \overset{\circlearrowright }{\mapsto }\quad
\begin{array}{ccc}
\xymatrixrowsep{27pt}\xymatrixcolsep{4cm} \xymatrix{\ar@{} [dr]
|{\left(\begin{array}{cc} \zeta _{11}^{rc} & \zeta _{12}^{rc} \\ \zeta
_{21}^{rc} & \zeta _{22}^{rc}=\zeta_{11}^{r}\end{array}\right)}
{\mathcal{A}'}\ar[r]^{F'}\ar@<.5ex>[d]^{R'}&\mathcal{A}\ar@<.5ex>[d]^{R}\\
{\mathcal{B}'}\ar@<.5ex>[u]^{L'}\ar[r]_{G'}&\mathcal{B}\ar@<.5ex>[u]^{L}} &
&
\end{array}%
\end{eqnarray*}
Now apply \cite[page 153]{Gray-FormalCat}, putting $\left( f,u\right) =\left(
G^{\prime },G\right) $, $\left( f^{\prime },u^{\prime }\right) =\left(
F^{\prime },F\right) $, $\left( g,v\right) =\left( L,R\right) $, $\left(
g^{\prime },v^{\prime }\right) =\left( L^{\prime },R^{\prime }\right)$, $%
\psi =\zeta _{22},$ $\widetilde{\psi }=\zeta _{11},$ $\widetilde{\widetilde{%
\psi }}=\zeta _{11}^{c},\theta =\zeta _{22}^{r},$ $\widetilde{\theta }=\zeta
_{11}^{r},$ $\widetilde{\widetilde{\theta }}=\zeta _{11}^{rc}.$ Then we
obtain that $\zeta _{11}^{rc}$ and $\zeta _{11}^{c}$ are mutual inverses.
Note that
\begin{equation*}
\zeta _{11}=\zeta _{22}^{c}\overset{(\ref{Adj4})}{=}FL\epsilon ^{\left(
G^{\prime },G\right) }\circ F\zeta _{11}^{c}G\circ \eta ^{\left( F^{\prime
},F\right) }L^{\prime }G
\end{equation*}%
so that, as a composition of isomorphisms, $\zeta _{11}$ is an isomorphism.
\newline
$\left( 1\right) \Rightarrow \left( 2\right) .$ This implication is shown in
a very similar fashion, by applying the dual result of \cite[page 153]%
{Gray-FormalCat}.
\end{proof}

\begin{definition}
Following \cite[1.4]{Berger-Mellies-Weber}, an adjoint square as in (\ref%
{diag:AdjSquare}) is called \textit{exact} whenever both $\zeta
_{11}:L^{\prime }G\rightarrow FL$ and $\zeta _{22}:GR\rightarrow R^{\prime
}F $ are isomorphisms. Note that this implies that the given diagram
commutes -up to isomorphism- when either the left adjoint functors or the
right adjoint functors are omitted.
\end{definition}

\begin{remark}
\label{rem:commdat}Consider a square of functors like in (\ref%
{diag:AdjSquare}) and assume that $GR=R^{\prime }F.$ Then we can set $\zeta
_{22}:={\mathsf{Id}}_{GR}$ and we get an adjoint square. This square is
exact if and only if $\left( F,G\right) :\left( L,R\right) \rightarrow
\left( L^{\prime },R^{\prime }\right) $ is a \emph{commutation datum} in the sense
of \cite[Definition 2.3]{AM-MonadSym}.
\end{remark}

\begin{proposition}
\label{pro:leftad}Consider two adjunctions $\left( L,R,\eta ,\epsilon
\right) $ and $\left( L',R',\eta ',\epsilon '\right) $ as in the diagram%
\begin{equation*}
\xymatrix{\mathcal{A}\ar[r]^{\id}\ar@<.5ex>[d]^R&\mathcal{A}\ar@<.5ex>[d]^{R^\prime}\\
\mathcal{B}\ar@<.5ex>[u]^{L}\ar[r]_G&\mathcal{B}^{\prime
}\ar@<.5ex>[u]^{L^\prime}}
\end{equation*}%
where $G$ is a functor such that $GR=R^{\prime }.$ Then the diagram above is
an adjoint square with matrix $\left( \zeta _{ij}\right) $, where $\zeta
_{22}:GR\rightarrow R^{\prime }$ is the identity.
 If $\eta $ is invertible, then $\left( RL^{\prime },G,\eta ^{\prime
},\eta ^{-1}\circ R\zeta _{11}\right) $ is an adjunction too.
\end{proposition}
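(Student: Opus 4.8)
The plan is to establish that the pair $(RL',G)$ forms an adjunction by exhibiting $\eta'$ as unit and $\eta^{-1}\circ R\zeta_{11}$ as counit, and then verifying the two triangle identities. First I would set notation: the adjoint square has matrix $(\zeta_{ij})$ with $\zeta_{22}=\mathsf{Id}_{GR}$ the identity, so by the defining relations \eqref{Adj1}--\eqref{Adj4} all four transposes are determined by $\zeta_{22}$. In particular \eqref{Adj3} yields $\zeta_{21}=G\eta$ (since $\zeta_{22}L\circ G\eta=G\eta$), and then $\zeta_{11}=\epsilon'FL\circ L'\zeta_{21}=\epsilon'L\circ L'G\eta$ via \eqref{Adj1} with $F=\mathsf{id}$, $R'=G$-compatible data. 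I would record these explicit forms since the counit $\eta^{-1}\circ R\zeta_{11}$ is built from $\zeta_{11}$, and having $\zeta_{11}$ written as a concrete composite will make the triangle identities tractable.

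Next I would name the candidate adjunction data precisely. With $\mathcal{A}\xrightarrow{R}\mathcal{B}$ and $G:\mathcal{B}\to\mathcal{B}'$, the composite $RL':\mathcal{B}'\to\mathcal{B}$ is the proposed right adjoint to $G$ (or the proposed left adjoint, depending on orientation; here $G$ sits on the bottom, so I expect $(RL',G)$ with $RL'$ left adjoint to $G$). The proposed unit is $\eta':\mathsf{id}_{\mathcal{B}'}\to R'L'=GRL'\cdot$(using $R'=GR$, so $R'L'=GRL'$), which is exactly the unit of the adjunction $(L',R')$ — this typechecks because $\eta':\mathsf{id}_{\mathcal{B}'}\to R'L'=GRL'=G(RL')$. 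The proposed counit is $\eta^{-1}\circ R\zeta_{11}:RL'G\to R'$... — I must be careful: $R\zeta_{11}:RL'G\to RFL=RL$ (using $F=\mathsf{id}$), and $\eta^{-1}:RL\to\mathsf{id}_{\mathcal{B}}$ since $\eta$ is invertible, so the composite lands as a natural transformation $RL'G\to\mathsf{id}_{\mathcal{B}}$. Thus the counit $\varepsilon'':=\eta^{-1}\circ R\zeta_{11}:(RL')G\to\mathsf{id}_{\mathcal{B}}$ has the correct type for an adjunction $(RL'\dashv G)$.

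I would then verify the two triangle identities $\varepsilon'' G\circ G\eta'=\mathsf{id}_G$ and $RL'\varepsilon''\circ\eta'RL'=\mathsf{id}_{RL'}$. The strategy is to substitute the explicit expression $\zeta_{11}=\epsilon'L\circ L'G\eta$ (from the first bullet chain) together with the invertibility of $\eta$, and reduce each identity to the known triangle identities of the two given adjunctions $(L,R)$ and $(L',R')$, using naturality and the relation $GR=R'$. For the first identity I expect the cancellation $\eta^{-1}\circ\eta=\mathsf{id}$ to appear after pushing $G\eta'$ through naturality squares and invoking a triangle identity for $(L',R')$; for the second, the triangle identity for $(L,R)$ should do the analogous job. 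The main obstacle will be bookkeeping the whiskerings correctly and making sure the appearances of $\eta^{-1}$ cancel against the $\eta$ that is hidden inside $\zeta_{11}$ — that is, confirming that the invertibility hypothesis on $\eta$ is used exactly where needed and that no residual $\eta$ survives. I would handle this by expanding both triangle composites fully, applying naturality of $\epsilon'$ and $\eta$ to commute $G$ past the relevant transformations, and then collapsing via the standard identities; the computation is routine once $\zeta_{11}$ is in its explicit composite form, so I would present it as a short chain of equalities rather than belabor it.
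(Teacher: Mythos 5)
Your proposal is correct and follows essentially the same route as the paper: the paper also takes $\eta'$ as unit and $\eta^{-1}\circ R\zeta_{11}$ as counit and verifies the two triangle identities by rewriting $\zeta_{11}$, $\zeta_{12}$, $\zeta_{21}$ via the transpose relations \eqref{Adj1}--\eqref{Adj3} (with $\zeta_{22}=\mathsf{Id}$) and invoking the triangle identities of $(L,R)$ and $(L',R')$, exactly as you outline. Only a notational slip: the triangle identities for $RL'\dashv G$ should read $G\varepsilon''\circ \eta'G=\mathsf{Id}_{G}$ and $\varepsilon'' RL'\circ RL'\eta'=\mathsf{Id}_{RL'}$ (your whiskerings are written on the wrong sides), but this does not affect the substance of the argument.
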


\begin{proof}
Assume $\eta$ is invertible, set $F:=RL^{\prime }$ and%
\begin{eqnarray*}
\alpha &:=&\left[ FG=RL^{\prime }G\overset{R\zeta _{11}}{\longrightarrow }RL%
\overset{\eta ^{-1}}{\longrightarrow }\mathrm{Id}_{\mathcal{B}}\right] , \\
\beta &:=&\left[ \mathrm{Id}_{\mathcal{B}^{\prime }}\overset{\eta ^{\prime }}%
{\longrightarrow }R^{\prime }L^{\prime }=GRL^{\prime }=GF\right] .
\end{eqnarray*}%
We compute that
\begin{eqnarray*}
\alpha F\circ F\beta &=&\eta ^{-1}F\circ R\zeta _{11}F\circ F\eta ^{\prime
}=\eta ^{-1}RL^{\prime }\circ R\zeta _{11}RL^{\prime }\circ RL^{\prime }\eta
^{\prime } \\
&=&R\epsilon L^{\prime }\circ R\zeta _{11}RL^{\prime }\circ RL^{\prime }\eta
^{\prime }=R\left[ \left( \epsilon \circ \zeta _{11}R\right) L^{\prime
}\circ L^{\prime }\eta ^{\prime }\right] \\
&=&R\left( \zeta _{12}L^{\prime }\circ L^{\prime }\eta ^{\prime }\right)
=R\left( \left( \epsilon ^{\prime }\circ L^{\prime }\zeta _{22}\right)
L^{\prime }\circ L^{\prime }\eta ^{\prime }\right) =RL^{\prime }=F
\end{eqnarray*}%
and%
\begin{equation*}
G\alpha \circ \beta G=G\eta ^{-1}\circ GR\zeta _{11}\circ \eta ^{\prime
}G=G\eta ^{-1}\circ R^{\prime }\zeta _{11}\circ \eta ^{\prime }G=G\eta
^{-1}\circ \zeta _{21}=G\eta ^{-1}\circ \zeta _{22}L\circ G\eta =G,
\end{equation*}%
so $\left( RL^{\prime },G,\eta ^{\prime },\eta ^{-1}\circ R\zeta
_{11}\right) $ is an adjunction.
\end{proof}

\begin{remark}\label{rem-3.8}
Consider the following diagram
\begin{equation*}
\xymatrix{\mathsf{Bialg}(\mathfrak{M})\ar[r]^{\id}\ar@<.5ex>[d]^{\mathcal{P}}&\mathsf{Bialg}(\mathfrak{M})\ar@<.5ex>[d]^{P}\\
\mathsf{{Lie}_{p}}\ar@<.5ex>[u]^{\widetilde{\mathfrak{u}}}\ar[r]_{H_{\mathsf{{Lie}_{p}}}}&\mathfrak{M}\ar@<.5ex>[u]^{\widetilde{T}}}
\end{equation*}%
In the previous section we observed that $\widetilde{\eta }_{\mathrm{L}}:{\mathsf{id%
}}_{{\mathsf{{Lie}_{p}}}}\rightarrow \mathcal{P}\widetilde{\mathfrak{u}}$ is
an isomorphism. By Proposition \ref{pro:leftad}, the above is an adjoint square with matrix $\left( \zeta _{ij}\right) $, where $\zeta
_{22}:H_{\mathsf{{Lie}_{p}}}\mathcal{P}\rightarrow P$ is the identity.
Moreover, we also have that $\left( \mathcal{P}\widetilde{T},H_{\mathsf{{Lie}_{p}}},\widetilde{%
\eta },\widetilde{\eta }_{\mathrm{L}}^{-1}\circ \mathcal{P}\zeta
_{11}\right) $ is an adjunction. Thus $\mathcal{P}\widetilde{T}$ is a left adjoint of the functor $%
H_{\mathsf{{Lie}_{p}}}$. Let as write it explicitly on objects. For $V$ in $\mathfrak{M}$, we have
\begin{equation*}
\mathcal{P}\widetilde{T}V=\left( P\widetilde{T}V,\left[ -,-\right] ,-^{\left[
p\right] }\right)
\end{equation*}%
where $\left[ -,-\right] \ $and $-^{\left[ p\right] }$ are defined for every
$x,y\in P\widetilde{T}V$ by $\left[ x,y\right] =x\otimes y-y\otimes x\ $and $%
x^{\left[ p\right] }=x^{\otimes p}$ respectively.
\\The left adjoint of $H_{\mathsf{{Lie}_{p}}}$ appeared in the literature, for some particular base field $\Bbbk$ (e.g. $\Bbbk=\mathbb{Z}_2$), under the name of ``free restricted Lie algebra functor'' and several constructions of this functor can be found. We note that here, in our approach, no additional requirement on $\Bbbk$ is needed other than the finite characteristic. A similar description can be obtained even in characteristic zero.

\end{remark}

\begin{corollary}\label{coro:leftad}
  In the setting of Proposition \ref{pro:leftad}, assume that both $\eta $ and $\eta ^{\prime }$ are invertible and $G$ is
conservative. Then $\left( RL^{\prime },G,\eta ^{\prime },\eta ^{-1}\circ
R\zeta _{11}\right) $ is an adjoint equivalence. Moreover, $\zeta
_{11}:L^{\prime }G\rightarrow L$ is invertible (so that the adjoint square considered in Proposition \ref{pro:leftad} is a commutation datum).
\end{corollary}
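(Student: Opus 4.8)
The plan is to build directly on Proposition \ref{pro:leftad}, which already furnishes the adjunction $\left( RL^{\prime },G,\eta ^{\prime },\alpha \right) $ with counit $\alpha :=\eta ^{-1}\circ R\zeta _{11}$. To upgrade this adjunction to an adjoint equivalence it suffices to verify that both its unit and its counit are isomorphisms. The unit $\eta ^{\prime }$ is invertible by hypothesis, so the whole matter reduces to showing that the counit $\alpha $ is invertible.

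The key observation is that one of the triangle identities for $\left( RL^{\prime },G\right) $ reads $G\alpha \circ \eta ^{\prime }G=\id_{G}$; this identity was in fact verified explicitly in the proof of Proposition \ref{pro:leftad}. Since $\eta ^{\prime }$ is invertible, the whiskered transformation $\eta ^{\prime }G$ is invertible too, whence $G\alpha =\left( \eta ^{\prime }G\right) ^{-1}$ is an isomorphism. Now $G$ is assumed conservative, i.e. it reflects isomorphisms, so from each component $G\left( \alpha _{Y}\right) $ being an isomorphism we deduce that each $\alpha _{Y}$, and hence $\alpha $ itself, is an isomorphism. Thus both unit and counit of $\left( RL^{\prime },G\right) $ are invertible and $\left( RL^{\prime },G,\eta ^{\prime },\alpha \right) $ is an adjoint equivalence, establishing the first assertion.

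For the second assertion I would feed this conclusion back into the adjoint square of Proposition \ref{pro:leftad}. Having just shown that $G$ is an equivalence of categories, and since the top horizontal functor of that square is $\id$ (trivially an equivalence), Lemma \ref{lem:equiv} applies and tells us that in this square $\zeta _{11}$ is an isomorphism if and only if $\zeta _{22}$ is. But $\zeta _{22}$ was taken to be the identity, hence $\zeta _{11}:L^{\prime }G\rightarrow L$ is an isomorphism. Consequently both $\zeta _{11}$ and $\zeta _{22}$ are invertible, so the square is exact, and by Remark \ref{rem:commdat} this exactness is precisely the statement that $\left( \id,G\right) :\left( L,R\right) \rightarrow \left( L^{\prime },R^{\prime }\right) $ is a commutation datum, giving the final clause.

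The step I expect to be the real crux is the deduction that $\alpha $ is invertible. One is tempted to argue directly from $\alpha =\eta ^{-1}\circ R\zeta _{11}$ that $R\zeta _{11}$, and thence $\zeta _{11}$, is invertible; but $R$ need not reflect isomorphisms, so this shortcut is unavailable and would at best land one at the level of $\mathcal{A}$ rather than at $\zeta _{11}$ itself. The correct route is to exploit conservativity of $G$ through the triangle identity first, establish the equivalence, and only afterwards recover invertibility of $\zeta _{11}$ via Lemma \ref{lem:equiv}. Getting this order of operations right is the main subtlety.
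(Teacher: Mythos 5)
Your argument is correct and follows essentially the same route as the paper's first proof: the paper likewise reduces everything to the invertibility of $R\zeta _{11}$, establishes it from the identity $R^{\prime }\zeta _{11}\circ \eta ^{\prime }G=\zeta _{21}=\zeta _{22}L\circ G\eta $ (which is exactly the computational content of your triangle identity $G\alpha \circ \eta ^{\prime }G=\mathsf{Id}_{G}$) together with conservativity of $G$, and then deduces invertibility of $\zeta _{11}$ from Lemma \ref{lem:equiv} because $\zeta _{22}$ is the identity. The paper also records a second proof via Lemma \ref{lem:Ber}, but your reasoning matches its Proof I, including the correct observation that conservativity must be applied before one can say anything about $\zeta _{11}$ itself.
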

\begin{proof}
  We give two alternative proofs of the first part of the statement. Then the last part follows by Lemma \ref{lem:equiv} as $\zeta _{22}$ is the identity.
    
  Proof I). By Proposition \ref{pro:leftad}, $\left( RL^{\prime },G,\eta ^{\prime
},\eta ^{-1}\circ R\zeta _{11}\right) $ is an adjunction too. Since $\eta ^{\prime
}$ is invertible, it remains to prove that $\eta ^{-1}\circ R\zeta _{11}$ is invertible i.e. that $R\zeta _{11}$ is.
 Since $R^{\prime }\zeta _{11}\circ \eta ^{\prime
}G=\zeta _{21}=\zeta _{22}L\circ G\eta $ is invertible, so is $R^{\prime
}\zeta _{11}$. Thus $GR\zeta _{11}=R^{\prime }\zeta _{11}$ is invertible.
Since $G$ is conservative, we conclude.

Proof II). Since $\eta $ and $\eta ^{\prime }$ are isomorphisms, we can apply
Lemma \ref{lem:Ber} to the following diagram%
\begin{equation*}
\xymatrixrowsep{15pt}\xymatrixcolsep{20pt}\xymatrix{&\mathcal{A}
\ar@<.5ex>[dl]^{R^{\prime}}\ar@<.5ex>[dr]^{R}\\
\mathcal{B^{\prime}}\ar@{.>}@<.5ex>[ur]^{L^{\prime}}\ar[dr]_{\mathrm{id}}&&%
\mathcal{B}\ar@{.>}@<.5ex>[ul]^{L}\ar@<.5ex>[dl]^{G}\\ &\mathcal{B^{\prime}}}
\end{equation*}%
to deduce that $\left( RL^{\prime },R^{\prime }L\right) $ is an adjoint
equivalence. By hypothesis, we have $R^{\prime }=GR.$ Thus we can use the
isomorphism $G\eta :G\rightarrow GRL=R^{\prime }L$ to replace $R^{\prime }L$
by $G$ in the adjunction. 
\end{proof}

\begin{remark}\label{rem-3.10}
We are now able to provide a different closing for the proof of Theorem \ref{teo:DecVec}. Once proved that $\widetilde{\eta }_{2}$ and $\widetilde{\eta }_{\mathrm{L}}$ are isomorphisms, we can apply Corollary \ref{coro:leftad} to the
following diagram%
\begin{equation*}
\xymatrixrowsep{25pt}\xymatrixcolsep{3cm}\xymatrix{\ar@{} [dr]
|{\left(\begin{array}{cc} \chi _{11} & \chi _{12} \\ \chi _{21} & \chi
_{22}\end{array}\right)}
\Bialg(\mathfrak{M})\ar[r]^{\id}%
\ar@<.5ex>[d]^{P_{2}}&\Bialg(\mathfrak{M})\ar@<.5ex>[d]^{\mathcal{P}}\\
{\mathfrak{M}}_{2}\ar@<.5ex>[u]^{{\widetilde{T}}_{2}}\ar[r]_{\Lambda}&\RLie%
\ar@<.5ex>[u]^{\widetilde{\mathfrak{u}}}}
\end{equation*}%
which is an adjoint square with matrix $\left( \chi _{ij}\right) $ where $\chi
_{22}:\Lambda P_{2}\rightarrow \mathcal{P}$ is the identity and $\chi _{11}:=\widetilde{\epsilon }_{\mathrm{L}}%
{\widetilde{T}}_{2}\circ \widetilde{\mathfrak{u}}\Lambda \widetilde{%
\eta }_{2}$. As a consequence $\left( P_{2}\widetilde{\mathfrak{u}},\Lambda ,\widetilde{\eta }_{%
\mathrm{L}},\widetilde{\eta }_{2}^{-1}\circ P_{2}\chi _{11}\right) $ is an
adjoint equivalence. Moreover  $\chi _{11}:\widetilde{\mathfrak{u}}\Lambda\to {\widetilde{T}}_{2}$ is invertible so that $\widetilde{\mathfrak{u}}\Lambda\cong {\widetilde{T}}_{2}$. Since we already know that  $\Lambda P_{2}=\mathcal{P}$, we can identify $(\widetilde{T}_{2},P_{2})$ with $(\widetilde{\mathfrak{u}},%
\mathcal{P})$ via $\Lambda $.
\end{remark}

\end{document}